\newtheorem{thm}{Theorem}
\newtheorem{crl}{Corollary}
\newtheorem{cnj}{Conjecture}
\newtheorem{prp}{Proposition}
\newtheorem{lmm}{Lemma}
\newtheorem{rmk}{Remark}
\newtheorem{dfn}{Definition}
\newtheorem*{thm*}{Theorem}
\newcommand{\LR}{\Leftrightarrow}
\newcommand{\st}{\stackrel}
\newcommand{\ra}{\rightarrow}
\newcommand{\mt}{\mapsto}
\newcommand{\HFR}{\mathrm{Hom}(F,\mathbb R)}
\newcommand{\HFCp}{\mathrm{Hom}(F,\mathbb C_p)}
\newcommand{\prn}{\mathbb R_{+}}
\newcommand{\nni}{\mathbb Z_{\geq 0}}
\newcommand{\ttt}{\,{}^t}
\renewcommand{\Re}{\mathrm{Re}}
\newcommand{\mfpi}{\mathfrak p_\iota}
\newcommand{\fpi}{\mathfrak f\mathfrak p_\iota}
\newcommand{\efp}{E_{\mathfrak f,+}}
\newcommand{\oq}{\overline{\mathbb Q}}
\newcommand{\subjclass}[2][2010]{%
  \let\@oldtitle\@title%
  \gdef\@title{\@oldtitle\footnotetext{#1 \emph{Mathematics subject classification(s).} #2}}%
}
\newcommand{\keywords}[1]{%
  \let\@@oldtitle\@title%
  \gdef\@title{\@@oldtitle\footnotetext{\emph{Key words and phrases.} #1.}}%
}
\title{$p$-adic measures associated with zeta values and $p$-adic $\log$ multiple gamma functions}
\author{Tomokazu Kashio\thanks{Tokyo University of Science, \texttt{kashio\_tomokazu@ma.noda.tus.ac.jp}}}
\subjclass{11R27, 11R37, 11R42, 11R80, 11S40, 11S80, 33B15.}
\keywords{the Gross-Stark conjecture, multiple gamma functions, $p$-adic measures}
\begin{document}


\maketitle

\begin{abstract}
We study a relation between two refinements of the rank one abelian Gross-Stark conjecture:
For a suitable abelian extension $H/F$ of number fields, a Gross-Stark unit is defined as a $p$-unit of $H$ satisfying some proporties.
Let $\tau \in \mathrm{Gal}(H/F)$.
Yoshida and the author constructed the symbol $Y_p(\tau)$ by using $p$-adic $\log$ multiple gamma functions,
and conjectured that the $\log_p$ of a Gross-Stark unit can be expressed by $Y_p(\tau)$.
Dasgupta constructed the symbol $u_T(\tau)$ by using the $p$-adic multiplicative integration, 
and conjectured that a Gross-Stark unit can be expressed by $u_T(\tau)$.
In this paper, we give an explicit relation between $Y_p(\tau)$ and $u_T(\tau)$.
\end{abstract}


\section{Introduction}

Let $F$ be a totally real field, $K$ a CM-field which is abelian over $F$, $S$ a finite set of places of $F$.
We assume that 
\begin{itemize}
\item $S$ contains all infinite places of $F$, all places of $F$ lying above a rational prime $p$, and all ramified places in $K/F$.
\item Let $\mathfrak p$ be the prime ideal corresponding to the $p$-adic topology on $F$. (Hence $\mathfrak p \in S$.)
Then $\mathfrak p$ splits completely in $K/F$.
\end{itemize}
For $\tau \in \mathrm{Gal}(K/F)$, we consider the partial zeta function 
\begin{align*}
\zeta_S(s,\tau):=\sum_{(\frac{K/F}{\mathfrak a})=\tau,\ (\mathfrak a,S)=1}N\mathfrak a^{-s}.
\end{align*}
Here $\mathfrak a$ runs over all integral ideals of $F$, relatively prime to any finite places in $S$, 
whose image under the Artin symbol $(\frac{K/F}{*})$ is equal to $\tau$.
The series converges for $\Re(s)>1$, has a meromorphic continuation to the whole $s$-plane, and is analytic at $s=0$.
Moreover, under our assumption, we see that 
\begin{itemize}
\item There exists the $p$-adic interpolation function $\zeta_{p,S}(s,\tau)$ of $\zeta_S(s,\tau)$.
\item $\mathrm{ord}_{s=0}\zeta_S(s,\tau),\mathrm{ord}_{s=0}\zeta_{p,S}(s,\tau) \geq 1$. 
\item There exist a natural number $W$, a $\mathfrak p$-unit $u$ of $K$, which satisfy 
\begin{align} \label{GSu}
\log |u^\tau|_\mathfrak P=-W\zeta_S'(0,\tau) \quad (\tau \in \mathrm{Gal}(K/F)).
\end{align}
Here $\mathfrak P$ denotes the prime ideal corresponding to the $p$-adic topology on $K$, $|x|_\mathfrak P:=N\mathfrak P^{-\mathrm{ord}_\mathfrak Px}$.
\end{itemize}
Gross conjectured the following $p$-adic analogue of the rank $1$ abelian Stark conjecture:

\begin{cnj}[{\cite[Conjecture 3.13]{Gr}}] \label{GSc}
Let $u$ be a $\mathfrak p$-unit characterized by {\rm (\ref{GSu})} up to roots of unity. Then we have
\begin{align*}
\log_p N_{K_\mathfrak P/\mathbb Q_p}(u^\tau)=-W\zeta_{p,S}(0,\tau).
\end{align*}
\end{cnj}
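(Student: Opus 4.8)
The plan is to prove Conjecture 1 one character at a time. Writing $G=\mathrm{Gal}(K/F)$, I would expand both $\zeta_S(s,\tau)$ and $\zeta_{p,S}(s,\tau)$ over the characters $\chi$ of $G$, reducing the claim to an identity for each Artin $L$-function $L_S(s,\chi)$ and its Deligne--Ribet $p$-adic avatar $L_{p,S}(s,\chi)$. Because $\mathfrak p$ splits completely in $K/F$, every $\chi$ is trivial on the decomposition group at $\mathfrak p$, so the Euler-type interpolation factor at $\mathfrak p$ vanishes at $s=0$; this is the source of the trivial zero and of the order bound $\mathrm{ord}_{s=0}\zeta_{p,S}(s,\tau)\geq 1$ recorded above. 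The target is then an \emph{exceptional-zero formula}: the leading term of $L_{p,S}(s,\chi)$ at $s=0$ should equal the classical leading term times an $\mathcal L$-invariant, and Gross's conjecture asserts that this $\mathcal L$-invariant is precisely $\log_p N_{K_{\mathfrak P}/\mathbb Q_p}$ of the Gross--Stark unit attached to $\chi$---the unit whose $\mathfrak P$-adic valuation is already fixed by the complex formula $\log|u^\tau|_{\mathfrak P}=-W\zeta_S'(0,\tau)$.

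To access the leading term I would realize $L_{p,S}(s,\chi)$ as a specialization of a $p$-adic family. Concretely, I would take the Hida family of Hilbert modular Eisenstein series over $F$ passing through the weight-one series $E(\chi)$, and study its diagonal restriction, whose constant terms interpolate the relevant $L$-values. A Greenberg--Stevens style differentiation in the weight variable then converts the first-order behaviour of $L_{p,S}$ at $s=0$ into data extracted from the first-order deformation of the family. The essential step is to match this infinitesimal data to a global object: via congruences between the Eisenstein series and cusp forms (Ribet's method), I would produce a nonzero class in an appropriate Galois cohomology group, equivalently an extension of $p$-adic Galois representations, whose associated cocycle computes $\log_p N_{K_{\mathfrak P}/\mathbb Q_p}$ of the Gross--Stark unit.

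The \emph{main obstacle} is the identification of the $\mathcal L$-invariant with the $p$-adic logarithm of the unit: one must show that the class produced by Ribet's method is genuinely the Gross--Stark unit, not merely some $p$-unit with the correct $\mathfrak P$-valuation. For this I would bring in the Iwasawa Main Conjecture for the totally real field $F$ (Wiles) to control the characteristic ideal governing the relevant cohomology, thereby forcing the cocycle to be nonvanishing and to carry exactly the predicted logarithm. A secondary difficulty is eliminating auxiliary hypotheses (a single prime above $p$, or the vanishing of a Leopoldt-type cohomology group): I would recast everything $\Lambda$-adically, comparing leading terms over the Iwasawa algebra and over the full Eisenstein ideal before specializing to $\chi$. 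Finally I would check compatibility with the archimedean formula of the excerpt, confirming that one and the same $u$ satisfies both the valuation identity and the $p$-adic logarithm identity. As an independent route suggested by the setting of this paper, one could instead try to build $u$ explicitly---for instance through the $p$-adic multiplicative integral or $p$-adic multiple gamma functions referred to in the introduction---and verify the two defining properties directly, though proving that such an analytic construction yields a global unit with the correct $\log_p$ is itself essentially the content of the conjecture.
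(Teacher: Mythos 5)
The statement you are asked to prove is Conjecture~1 of the paper, i.e.\ Gross's conjecture itself, and the paper does \emph{not} prove it: it is stated as background, with the remark that Dasgupta--Darmon--Pollack \cite{DDP} established ``a large part'' of it under additional hypotheses. The paper's actual contribution is a comparison of two conjectural refinements (the invariants $Y_p(\tau)$ and $u_T(\tau)$), not a proof of the conjecture. So there is no internal proof to compare against, and any submission claiming to prove this statement must be judged as a free-standing argument.

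As such, your text is a research program, not a proof, and the gap is total rather than local. You correctly identify the Dasgupta--Darmon--Pollack strategy (character decomposition, the trivial zero coming from $\mathfrak p$ splitting completely, Hida families of Eisenstein series, Greenberg--Stevens differentiation in the weight, Ribet's method producing a Galois cohomology class, input from Wiles's Main Conjecture), but every load-bearing step is asserted as a goal rather than carried out: you do not construct the family, do not compute the derivative of its constant terms, do not produce the cocycle, and do not prove that the cocycle computes $\log_p N_{K_\mathfrak P/\mathbb Q_p}(u^\tau)$. The decisive difficulty --- identifying the $\mathcal L$-invariant arising from the infinitesimal deformation with the $p$-adic logarithm of the \emph{global} Gross--Stark unit, and removing the auxiliary hypotheses (one prime above $p$, vanishing of a Leopoldt-type cohomology group) under which DDP work --- is precisely what you label the ``main obstacle'' and then leave unresolved; your closing sentence concedes that the alternative explicit route ``is itself essentially the content of the conjecture,'' which makes that branch circular. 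Within the scope of this paper the statement remains a conjecture, and what you have written is an accurate summary of how one would attack it, not a proof that it holds.
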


Dasgupta-Darmon-Pollack \cite{DDP} proved a large part of Conjecture \ref{GSc}.
Yoshida and the author, and independently Dasgupta formulated refinements of Conjecture \ref{GSc}:
Let $\mathfrak f$ be an integral ideal of a totally real field $F$ satisfying $\mathfrak p\nmid \mathfrak f$, 
$H_\mathfrak f$ the narrow ray class field modulo $\mathfrak f$, 
$H$ the maximal subfield of $H_\mathfrak f$ where $\mathfrak p$ splits completely.
Yoshida and the author \cite{KY1} essentially constructed the invariant $Y_p(\tau)$ (Definition \ref{Yp}) 
for $\tau \in \mathrm{Gal}(H/F)$ by using $p$-adic $\log$ multiple gamma functions.
Then \cite[Conjecture A$'$]{KY1} states that $\log_p u^\tau$ (without $N_{K_\mathfrak P/\mathbb Q_p}$) can be expressed by $Y_p(\tau)$.
On the other hand, Dasgupta constructed the invariant $u_T(\mathfrak b ,\mathcal D_\mathfrak f)$ (Definition \ref{ueta}-(iv)) 
by using the multiplicative integration for $p$-adic measures associated with Shintani's multiple zeta functions.
Then \cite[Conjecture 3.21]{Da} states that a modified version of $u^\tau$ can be expressed by $u_T(\mathfrak b ,\mathcal D_\mathfrak f)$.
In \cite[Remark 2]{Ka3}, the author announced the following relation between these refinements.

\begin{thm*}[Theorem \ref{main}]
Let $\eta$ be a ``good'' prime ideal in the sense of {\rm Definition \ref{assump}}.
We put $T:=\{\eta\}$. 
Then we have 
\begin{align*}
\log_p(u_\eta(\mathfrak b ,\mathcal D_\mathfrak f))
=-Y_p((\tfrac{H/F}{\mathfrak b}))
+N\eta\,Y_p((\tfrac{H/F}{\mathfrak b \eta^{-1}})). 
\end{align*}
\end{thm*}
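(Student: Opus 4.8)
The plan is to exploit the fact that both invariants are assembled from the \emph{same} Shintani data attached to the cone decomposition $\mathcal D_\mathfrak f$, the essential difference being only in the packaging: Dasgupta's $u_\eta(\mathfrak b,\mathcal D_\mathfrak f)$ encodes this data as a $p$-adic \emph{multiplicative} integral, whereas $Y_p$ encodes it \emph{additively} through $p$-adic $\log$ multiple gamma functions. Since $\log_p$ carries a multiplicative integral to the additive integral of its $\log_p$, applying $\log_p$ to $u_\eta(\mathfrak b,\mathcal D_\mathfrak f)$ should yield an additive $p$-adic integral that can be compared, cone by cone, with the right-hand side.

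First I would unwind the definition of $u_\eta(\mathfrak b,\mathcal D_\mathfrak f)$ as a multiplicative integral of the coordinate function against the $T$-smoothed ($T=\{\eta\}$) $p$-adic measure $\mu_\eta$ associated with Shintani's multiple zeta functions, so that
\begin{align*}
\log_p\bigl(u_\eta(\mathfrak b,\mathcal D_\mathfrak f)\bigr)=\int \log_p(x)\,d\mu_\eta(x).
\end{align*}
The $\eta$-smoothing built into Dasgupta's construction expresses $\mu_\eta$ as a combination of the unsmoothed measure attached to $\mathfrak b$ and its translate attached to $\mathfrak b\eta^{-1}$, the latter weighted by $N\eta$; this is precisely the structural origin of the two terms and of the factor $N\eta$ on the right-hand side, the shift $\mathfrak b\mapsto\mathfrak b\eta^{-1}$ reflecting multiplication by the Artin symbol of $\eta$.

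Second, I would decompose this additive integral along the cones of $\mathcal D_\mathfrak f$. On each Shintani cone the unsmoothed measure admits an explicit description through Shintani's generating function and the attached Bernoulli-type distribution, so the integral of $\log_p$ of the linear coordinate forms splits into a finite sum of local contributions. The aim is to identify each local contribution with the value of the $p$-adic $\log$ multiple gamma function entering $Y_p\bigl((\tfrac{H/F}{\mathfrak b})\bigr)$. The bridge is that both the measure-theoretic integral and the $p$-adic $\log$ multiple gamma function are $p$-adic interpolations of one and the same complex-analytic quantity, the derivative at $s=0$ of the relevant Shintani zeta function, equivalently of $\zeta_S'(0,\cdot)$.

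The main obstacle is this local matching: I must show that two a priori distinct $p$-adic interpolations of the same Shintani data agree, namely Dasgupta's measure-theoretic one and Yoshida's via $p$-adic $\log$ multiple gamma functions. I would establish the equality by a uniqueness argument, showing that both sides are continuous $p$-adic functions interpolating the same special values and hence coincide by density of the interpolating test functions (a Mahler/Weierstrass-type density argument). The delicate points will be the convergence of the additive integral, since the integrand $\log_p$ is unbounded near the boundary and must be controlled on the support of $\mu_\eta$, which is exactly where the good-prime hypothesis on $\eta$ and the $T$-smoothing are used, together with the careful bookkeeping of normalization constants, cone orientations, and the factor $W$, so that the signs and the $N\eta$-weight emerge exactly as stated.
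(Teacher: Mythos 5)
There is a genuine gap, in fact two.

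First, your starting identity is wrong: by Definition \ref{ueta}-(iv), $u_\eta(\mathfrak b,\mathcal D_\mathfrak f)$ is \emph{not} the multiplicative integral alone, but the product
\begin{align*}
u_\eta(\mathfrak b,\mathcal D_\mathfrak f)=\epsilon_\eta(\mathfrak b,\mathcal D_\mathfrak f,\pi)\,
\pi^{\zeta_{\mathfrak f,\eta}(0,(\frac{H_\mathfrak f/F}{\mathfrak b}))}
\times\hspace{-1.1em}\int_{\mathbf O}x\ d\nu_\eta(\mathfrak b,\mathcal D_\mathfrak f,x),
\end{align*}
and the two correction factors are not killed by $\log_p$: $\epsilon_\eta$ is a global unit and $\pi$ generates $\mathfrak p^e$, so both have nonzero $p$-adic logarithm in general. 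In the paper's proof these factors carry exactly half of the answer: equality (\ref{Cl1}) shows $\log_p$ of the correction factors equals $[L\Gamma_\eta(\mathfrak b,\mathcal D_\mathfrak f,\mathbf O)]_p$, the image under $[\ ]_p$ of the \emph{classical} log multiple gamma sums, while the multiplicative integral contributes only $-L\Gamma_{\eta,p}(\mathfrak b,\mathcal D_\mathfrak f,\mathbf O)$ (equality (\ref{Cl2})). Since $Y_p$ is by Definition \ref{Yp} the difference of a $p$-adic part and a classical part, $L\Gamma_p(\cdots)-[L\Gamma(\cdots)]_p$, your version of the computation can only ever produce the $p$-adic half; the identity you would arrive at is simply false without the correction terms.

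Second, your ``bridge'' assumes precisely what has to be proved, and it is the paper's main discovery rather than a formality. Dasgupta's measure $\nu_\eta$ is defined through Shintani zeta functions \emph{with the norm}, $\zeta_N$, which involve all $n$ real embeddings of $F$; the functions $L\Gamma_p$ entering $Y_p$ interpolate Barnes zeta functions \emph{without the norm}, attached to the single embedding $\iota=\mathrm{id}$. These are different complex-analytic objects, so it is not true a priori that both sides interpolate ``one and the same complex-analytic quantity.'' The actual link is Lemma \ref{key} (fed into Lemma \ref{intzeta}): after $\eta$-smoothing, the relevant values at nonpositive integers of the norm and non-norm zeta functions coincide, because Cassou-Nogu\`es' explicit formula (Lemma \ref{CN}) gives the same finite closed-form expression for both once the sums are twisted by nontrivial roots of unity. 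Your Mahler/density uniqueness argument cannot substitute for this: uniqueness of a $p$-adic interpolation of given values is not in question; what must be established is that the two families of \emph{classical} special values being interpolated agree. (Minor points: convergence of $\int_{\mathbf O}\log_p x\,d\nu_\eta$ is not delicate, since $\mathbf O$ is compact in $F_\mathfrak p^\times$ and $\nu_\eta$ is $\mathbb Z$-valued; there is no factor $W$ in this statement; and matching the Shintani domain $\bmod\ E_{\mathfrak f,+}$ used for $u_\eta$ with the domain $\bmod\ E_+$ used in $Y_p$ is itself a nontrivial step, carried out in the paper's proof of (\ref{Cl3}) via Proposition \ref{eZ} and a change-of-domain argument, not mere bookkeeping.)
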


In particular, we see that two refinements are consistent 
(roughly speaking, \cite[Conjecture 3.21]{Da} is a further refinement of \cite[Conjecture A$'$]{KY1} by $\ker \log_p$).
The aim of this paper is to prove this Theorem.

Let us explain the outline of this paper.
In \S 2, we introduce Shintani's technique of cone decompositions.
We obtain a suitable fundamental domain of $F\otimes \mathbb R_+/E_{\mathfrak f,+}$,
where $F\otimes \mathbb R_+$ denotes the totally positive part of $F\otimes \mathbb R$, 
$E_{\mathfrak f,+}$ is a subgroup of the group of all totally positive units.
We need such fundamental domains in order to construct both of the invariants $Y_p$, $u_T$. 
In \S 3, we recall the definition and some properties of $Y_p$, which is essentially defined in \cite{KY1} and slightly modified in \cite{Ka3}.
The classical or $p$-adic $\log$ multiple gamma function is defined as the derivative values at $a=0$ of 
the classical or $p$-adic Barnes' multiple zeta function, respectively.
Then the invariant $Y_p(\tau,\iota)$ is defined in Definition \ref{Yp}, as a finite sum of the ``difference'' of $p$-adic $\log$ multiple gamma functions
and classical $\log$ multiple gamma functions.
Conjecture \ref{KYc} predicts exact values of $Y_p(\tau,\iota)$.
In \S 4, we also recall some results in \cite{Da}. 
Dasgupta introduced $p$-adic measures $\nu_T(\mathfrak b,\mathcal D_\mathfrak f)$ associated with special values of Shintani's multiple zeta functions,
and defined $u_T(\mathfrak b,\mathcal D_\mathfrak f)$ 
as the multiplicative integration $\times\hspace{-0.9em}\int_{\mathbf O}x\ d\nu_\eta(\mathfrak b,\mathcal D_\mathfrak f,x)$ with certain correction terms.
Dasgupta formulated a conjecture (Conjecture \ref{Dc}) on properties of $u_T(\mathfrak b,\mathcal D_\mathfrak f)$.
In \S 5, we state and prove the main result (Theorem \ref{main}) which gives an explicit relation 
between $Y_p(\tau,\mathrm{id})$ and $\log_p(u_\eta(\mathfrak b,\mathcal D_\mathfrak f))$.
Then we will see that Conjectures \ref{KYc}, \ref{Dc} are consistent in the sense of Corollary \ref{crlofmain}.
The key observation is Lemma \ref{key}: Dasgupta's $p$-adic measure $\nu_\eta(\mathfrak b,\mathcal D_\mathfrak f)$ is originally associated with 
Shintani's multiple zeta functions.
By this Lemma, we can relate $\nu_\eta(\mathfrak b,\mathcal D_\mathfrak f)$ to Barnes' multiple zeta functions and $p$-adic analogues 
as in Lemma \ref{intzeta}.

\section{Shintani domains}

Let $F$ be a totally real field of degree $n$, $\mathcal O_F$ the ring of integers of $F$, $\mathfrak f$ an integral ideal of $F$.
We denote by $F_+$ the set of all totally positive elements in $F$ and put $\mathcal O_{F,+}:=\mathcal O_F \cap F_+$,
$E_+:=\mathcal O_F^\times \cap F_+$.
We consider subgroups of $E_+$ of the following form:
\begin{align*}
\efp:=\{\epsilon \in E_+ \mid \epsilon \equiv 1 \bmod \mathfrak f\}.
\end{align*}
We identify
\begin{align*}
F \otimes \mathbb R = \mathbb R^n, \quad \sum_{i=1}^k a_i \otimes b_i \mt (\sum_{i=1}^k\iota(a_i)b_i)_{\iota \in \HFR},
\end{align*}
where $\HFR$ denotes the set of all real embeddings of $F$. 
In particular, the totally positive part
\begin{align*}
F \otimes \mathbb R_+:=\prn^n
\end{align*}
has a meaning. On the right-hand side, $\prn$ denotes the set of all positive real numbers.
Let $v_1,\dots,v_r \in \mathcal O_F$ be linearly independent. Then we define the cone with basis $\bm v:=(v_1,\dots,v_r)$ as
\begin{align*}
C(\bm v):=\{\bm t \ttt \bm v \in F\otimes \mathbb R \mid \bm t \in \prn^r\}.
\end{align*}
Here we $\bm t \ttt \bm v$ denotes the inner product.

\begin{dfn}
\begin{enumerate}
\item We call a subset $D \subset F \otimes \mathbb R_+$ is a Shintani set if it can be expressed as a finite disjoint union of cones:
\begin{align*}
D=\coprod_{i \in J} C(\bm v_j) \quad (|J|<\infty,\ \bm v_j \in \mathcal O_{F,+}^{r(j)},\ r(j) \in \mathbb N).
\end{align*}
\item We consider the natural action $\efp \curvearrowright F \otimes \mathbb R_+$, $u(a\otimes b):=(ua)\otimes b$.
We call a Shintani set $D$ a Shintani domain $\bmod \efp$ if it is a fundamental domain of $ F \otimes \mathbb R_+/\efp$:
\begin{align*}
F \otimes \mathbb R_+=\coprod_{\epsilon \in \efp} \epsilon D.
\end{align*}
\end{enumerate}
When $\mathfrak f=(1)$, we write $\bmod E_+$ instead of $\bmod E_{(1),+}$.
\end{dfn}

Shintani \cite[Proposition 4]{Sh} showed that there always exists a Shintani domain.

\section{$p$-adic $\log$ multiple gamma functions}

We recall the definition and some properties of the symbol $Y_p$ defined in \cite{KY1}, \cite{Ka3}.
We denote by $\prn$ the set of all positive real numbers.

\begin{dfn}
Let $z \in \prn$, $\bm v \in \prn^r$. 
Barnes' multiple zeta function is defined as 
\begin{align*}
\zeta(s,\bm v,z):=\sum_{\bm m \in \nni^r} (z+\bm m\ttt\bm v)^{-s}.
\end{align*}
This series converges for $\Re(s) >r$, has a meromorphic continuation to the whole $s$-plane, and is analytic at $s=0$.
Then Barnes' multiple gamma function is defined as
\begin{align*}
\Gamma(z,\bm v):=\exp\left(\frac{\partial}{\partial s}\zeta(s,\bm v,z)|_{s=0}\right).
\end{align*}
\end{dfn}

Note that this definition is modified from that given by Barnes.
For the proof and details, see \cite[Chap I, \S 1]{Yo}.
Throughout this paper, we regard each number field as a subfield of $\oq$, 
and fix two embeddings $\oq \hookrightarrow \mathbb C$, $\oq \hookrightarrow \mathbb C_p$.
Here $\mathbb C_p$ denotes the $p$-adic completion of the algebraic closure of $\mathbb Q_p$.
We denote by $\mu_{(p)}$ the group of all roots of unity of prime-to-$p$ order.
Let $\mathrm{ord}_p\colon \mathbb C_p^\times \ra \mathbb Q$, $\theta_p \colon \mathbb C_p^\times \ra \mu_{(p)}$ be unique group homomorphisms 
satisfying 
\begin{align} \label{ordtheta}
|p^{-\mathrm{ord}_p(z)}\theta_p(z)^{-1}z|_p<1 \quad (z \in \mathbb C_p^\times).
\end{align}

\begin{dfn}
Let $z \in \oq$, $\bm v \in (\oq^\times)^r$.
We assume that 
\begin{align}
&z \in \prn,\ \bm v \in \prn^r \text{ via the embedding }\oq \hookrightarrow \mathbb C, \notag \\
&\mathrm{ord}_p(z) < \mathrm{ord}_p(v_1),\dots,\mathrm{ord}_p(v_r) \text{ via the embedding }\oq \hookrightarrow \mathbb C_p. \label{cond}
\end{align}
Then we denote by $\zeta_p(s,\bm v,z)$ $(s \in \mathbb Z_p-\{1,2,\dots,r\})$ the $p$-adic multiple zeta function characterized by 
\begin{align} \label{interpolation}
\zeta_p(-m,\bm v,z)=p^{-\mathrm{ord}_p(z)m}\theta_p(z)^{-m}\zeta(-m,\bm v,z) \quad (m \in \nni).
\end{align}
We define the $p$-adic $\log$ multiple gamma function as
\begin{align*}
L\Gamma_p(z,\bm v):=\frac{\partial}{\partial s}\zeta_p(s,\bm v,z)|_{s=0}.
\end{align*}
\end{dfn}

The construction of $\zeta_p(s,\bm v,z)$ is due to Cassou-Nogu\`es \cite{CN1}.
The author defined and studied $L\Gamma_p(z,\bm v)$ in \cite{Ka1}.
See \cite[\S 2]{Ka3} for a short survey.

\begin{dfn}
Let $F$ be a totally real field, $\mathfrak f$ an integral ideal, 
$D=\coprod_{j \in J}C(\bm v_j)$ $(\bm v_j \in \mathcal O_{F,+}^{r(j)})$ a Shintani domain $\bmod E_+$.
We denote by $\HFR$ {\rm (}resp.\ $\HFCp${\rm)} the set of all embeddings of $F$ into $\mathbb R$ {\rm(}resp.\ $\mathbb C_p${\rm )}. 
Since we fixed embeddings $\oq \hookrightarrow \mathbb C$, $\oq \hookrightarrow \mathbb C_p$, we may identify 
\begin{align*}
\HFR=\HFCp.
\end{align*}
\begin{enumerate}
\item We denote by $C_\mathfrak f$ the narrow ideal class group modulo $\mathfrak f$,
by $H_\mathfrak f$ the narrow ray class field modulo $\mathfrak f$.
In particular, the Artin map induces 
\begin{align*}
C_\mathfrak f \cong \mathrm{Gal}(H_\mathfrak f/F).
\end{align*}
\item Let $\pi \colon C_\mathfrak f \ra C_{(1)}$ be the natural projection. 
For each $c \in C_\mathfrak f$, we take an integral ideal $\mathfrak a_c$ satisfying 
\begin{align*}
\mathfrak a_c\mathfrak f \in \pi(c).
\end{align*}
\item For $c \in C_\mathfrak f$, $\bm v \in \mathcal O_F^r$, we put
\begin{align*}
R(c,\bm v):=R(c,\bm v,\mathfrak a_c):=\{\bm x \in (\mathbb Q\cap (0,1])^r \mid \mathcal O_F \supset (\bm x\ttt\bm v) \mathfrak a_c\mathfrak f\in c\}.
\end{align*}
\item For $c \in C_\mathfrak f$, $\iota \in \HFR$, we define
\begin{align*}
G(c,\iota):=G(c,\iota,D,\mathfrak a_c):=\sum_{j \in J} \sum_{\bm x \in R(c,\bm v_j)} \log \Gamma(\iota(\bm x \ttt \bm v_j), \iota(\bm v_j)).
\end{align*}
\item For $\iota \in \HFR$ $(=\HFCp)$, we put
\begin{align*}
\mfpi:=\{z \in \mathcal O_F \mid |\iota(z)|_p<1\}.
\end{align*}
Note that the prime ideal $\iota(\mfpi)$ corresponds to the $p$-adic topology on $\iota(F) \subset \mathbb C_p$.
\item Assume that $\mfpi \mid \mathfrak f$. For $c \in C_\mathfrak f$, $\iota \in \HFR$, we define
\begin{align*}
G_p(c,\iota):=G_p(c,\iota,D,\mathfrak a_c):=\sum_{j \in J} \sum_{\bm x \in R(c,\bm v_j)} L\Gamma_p(\iota(\bm x \ttt \bm v_j), \iota(\bm v_j)).
\end{align*}
Note that $(\iota(\bm x \ttt \bm v_j), \iota(\bm v_j))$ satisfies the assumption {\rm (\ref{cond})} whenever $\mfpi \mid \mathfrak f$, $\bm x \in R(c,\bm v_j)$.
\end{enumerate}
\end{dfn}

The following map $[\ ]_p$ is well-defined by \cite[Lemma 5.1]{KY1}.

\begin{dfn}
We denote by $\oq \log_p \oq^\times$ {\rm (}resp.\ $\oq \log \oq^\times${\rm )} the $\oq$-subspace of $\mathbb C_p$ {\rm (}resp.\ $\mathbb C${\rm )} 
generated by $\log_p b$ {\rm (}resp.\ $\pi$, $\log b${\rm )} with $b \in \oq^\times$.
We define a $\oq$-linear map $[\ ]_p$ by 
\begin{align*}
[\ ]_p \colon \oq \log \oq^\times \ra \oq \log_p \oq^\times, \quad a\log b \mt a\log_p b, \quad a\pi \mt 0\quad (a,b \in \oq,\ b\neq 0).
\end{align*}
\end{dfn}

\begin{lmm}
Let $H$ be an intermediate field of $H_\mathfrak f/F$,
$\mathfrak q$ a prime ideal of $F$, relatively prime to $\mathfrak f$, splitting completely in $H/F$. 
Then we have
\begin{align*}
\sum_{c \in C_{\mathfrak f\mathfrak q},\ \mathrm{Art}(\overline c)|_H=\tau}G(c,\iota) \in \oq\log \oq^\times \quad (\tau \in \mathrm{Gal}(H/F)).
\end{align*}
Here $c$ runs over all ideal classes whose images under the composite map 
$C_{\mathfrak f\mathfrak q} \ra C_\mathfrak f \ra \mathrm{Gal}(H_\mathfrak f/F) \ra \mathrm{Gal}(H/F)$ is equal to $\tau$.
\end{lmm}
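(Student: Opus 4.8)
The plan is to read $\sum_{c}G(c,\iota)$ as the derivative at $s=0$ of a \emph{single-place} Shintani zeta function, and then to exploit that a prime $\mathfrak q$ splitting completely in $H$ forces an Euler-type factor vanishing at $s=0$; this vanishing annihilates precisely the transcendental ($\log\Gamma$/period) part of the derivative, leaving a logarithm of an algebraic number. To set this up I would first unwind the definitions. Since $\log\Gamma(z,\bm v)=\frac{\partial}{\partial s}\zeta(s,\bm v,z)|_{s=0}$, setting
\[
Z_\iota(s,c):=\sum_{j\in J}\sum_{\bm x\in R(c,\bm v_j)}\zeta(s,\iota(\bm v_j),\iota(\bm x\ttt\bm v_j))
\]
gives $G(c,\iota)=\frac{\partial}{\partial s}Z_\iota(s,c)|_{s=0}$. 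Expanding the Barnes series and writing $\xi=(\bm x+\bm m)\ttt\bm v_j$ with $\bm m\in\nni^{r(j)}$ yields $Z_\iota(s,c)=\sum_\xi\iota(\xi)^{-s}$, the sum being over the totally positive $\xi\in F$ in the Shintani domain $D$ cut out by the conditions defining $R(c,\bm v_j)$ at level $\mathfrak f\mathfrak q$. The crucial structural point is that this is a one-variable Dirichlet series in $\iota(\xi)^{-s}$, not in the full norm $N\xi^{-s}$.

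Next I would sum over the fiber. Because $\mathfrak q$ splits completely in $H/F$, its Frobenius acts trivially on $\mathrm{Gal}(H/F)$, so passing from level $\mathfrak f$ to level $\mathfrak f\mathfrak q$ (imposing coprimality to $\mathfrak q$ and summing over the residues compatible with $\tau$) removes the $\mathfrak q$-part without permuting the label $\tau$, producing a \emph{scalar} Euler factor. Concretely, I would establish a distribution relation of the shape
\[
\sum_{\mathrm{Art}(\overline c)|_H=\tau}Z_\iota(s,c)=B(s)\,A_\iota(s,\tau),
\]
where $A_\iota(s,\tau)$ is assembled from level-$\mathfrak f$ single-place Shintani zeta functions and is analytic at $s=0$, and $B(s)$ is the product of a scaling factor (of the form $\iota(\varpi)^{\pm s}$, equal to $1$ at $s=0$) with $(1-N\mathfrak q^{-s})$. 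In particular $B(0)=0$ and $B'(0)=\log N\mathfrak q$. The model case $F=\mathbb Q$, $\mathfrak f=(1)$ already exhibits this: there $\sum_cZ_\iota(s,c)=(N\mathfrak q^{s}-1)\zeta(s)$, whose derivative at $0$ is $\zeta(0)\log N\mathfrak q=-\tfrac12\log N\mathfrak q$.

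Finally I would conclude by the Leibniz rule. Because $B(0)=0$,
\[
\sum_{\mathrm{Art}(\overline c)|_H=\tau}G(c,\iota)=\frac{\partial}{\partial s}\bigl(B(s)A_\iota(s,\tau)\bigr)\Big|_{s=0}=A_\iota(0,\tau)\,B'(0),
\]
so the transcendental derivative $\frac{\partial}{\partial s}A_\iota|_{s=0}$, the term carrying the genuine $\log\Gamma$ and period contributions, is killed by $B(0)=0$. The surviving value $A_\iota(0,\tau)$ is algebraic, because the special values $\zeta(0,\iota(\bm v_j),\iota(\bm x\ttt\bm v_j))$ of Barnes' zeta at $s=0$ are Bernoulli-type polynomials with rational coefficients in the algebraic arguments $\iota(\bm v_j),\iota(\bm x\ttt\bm v_j)$. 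Hence the fiber sum equals $A_\iota(0,\tau)\log N\mathfrak q\in\oq\log\oq^\times$, as claimed.

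I expect the main obstacle to be the second step: setting up the single-place distribution relation cleanly and honestly. One must match the Shintani cone data (the $\bm v_j$ and the sets $R(c,\bm v_j)$) at the two levels $\mathfrak f$ and $\mathfrak f\mathfrak q$, handle a possibly non-principal $\mathfrak q$ and the several ray classes lying over $\tau$, and verify that the resulting vanishing factor $B(s)$ has the stated form with $B'(0)$ a logarithm of an algebraic number. Once this factorization is in place, the reductions in the first and third steps are essentially formal, the only transcendence input being the classical algebraicity of $\zeta(0,\bm v,z)$.
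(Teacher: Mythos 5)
Your step (2) is where the argument breaks, and not merely because it is laborious: the exact factorization $\sum_{c}Z_\iota(s,c)=B(s)\,A_\iota(s,\tau)$ with a \emph{scalar} factor $B(s)$ is false once $[F:\mathbb Q]>1$. The Euler factor $(1-N\mathfrak q^{-s})$ in the usual distribution relation comes from the norm: an ideal $\mathfrak b=\mathfrak q\mathfrak b'$ contributes $N\mathfrak q^{-s}N\mathfrak b'^{-s}$, and, just as importantly, the element of a Shintani domain representing a given ideal is only well defined up to $E_+$, which the norm kills. A single-place series $\sum_z\iota(z)^{-s}$ sees neither mechanism. Concretely, if $z\in D$ parametrizes $\mathfrak b$ at level $\mathfrak f\mathfrak q$ (so $z\mathfrak a_c\mathfrak f\mathfrak q=\mathfrak b$) and $w\in D$ parametrizes the same $\mathfrak b$ at level $\mathfrak f$ (so $w\mathfrak a'_{c'}\mathfrak f=\mathfrak b$), then $z=\lambda\,\epsilon_\mathfrak b\,w$ where $\lambda$ is a fixed generator of a fixed fractional ideal but $\epsilon_\mathfrak b\in E_+$ is the unit needed to bring $\lambda w$ back into $D$, and it genuinely depends on $\mathfrak b$. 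Under $N$ these unit factors vanish; under $\iota$ they give factors $\iota(\epsilon_\mathfrak b)^{-s}$ varying term by term, so the fiber sum at level $\mathfrak f\mathfrak q$ is not proportional to any level-$\mathfrak f$ function, and the Leibniz step $\Sigma'(0)=A_\iota(0,\tau)B'(0)$ collapses. Your model case $F=\mathbb Q$ is misleading precisely because there $\iota(z)=Nz$ and the Shintani domain $\mathbb R_+$ is unique, so no unit discrepancy can occur; already for $F$ real quadratic with narrow class number one, $\mathfrak f=(1)$ and $\mathfrak q=(\varpi)$ one finds that the fiber sum equals $\iota(\varpi)^{s}Z_\iota(s,\mathcal O_{F,+}\cap\varpi D)-Z_\iota(s,\mathcal O_{F,+}\cap D)$, and $\mathcal O_{F,+}\cap\varpi D$ differs from $\mathcal O_{F,+}\cap D$ by an ideal-dependent unit rearrangement, so no scalar $B(s)$ can be pulled out.

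What survives of your idea is a congruence, not an identity: the discrepancy terms contribute, at the level of derivatives at $s=0$, only sums of the form $\zeta_\iota(0,R)\log\iota(\epsilon)$, hence elements of $\oq\log\oq^\times$. But proving \emph{that} requires matching the two parametrizations cone by cone (a rearrangement lemma of the type of \cite[Chap.\ III, Lemma 3.13]{Yo}, which the paper also invokes in the proof of Theorem \ref{main}) combined with the $\zeta(0)\log$ mechanism of Proposition \ref{eZ}, and the bookkeeping for nontrivial class groups, the base ideals $\mathfrak a_c$, and the ray-class congruence conditions is exactly what the correction terms $W$, $V$ of \cite{KY1} are designed to absorb. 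This is why the paper's actual proof looks completely different and is much shorter: it sets $X(c,\iota)=G(c,\iota)+W(c,\iota)+V(c,\iota)$, quotes that $W,V\in\oq\log\oq^\times$ (by definition and \cite{KY2}, Appendix I) and that $\sum_{c}\chi_\mathfrak q(c)X(c,\iota)\in\oq\log\oq^\times$ for every character $\chi$ of $C_\mathfrak f$ with $\chi([\mathfrak q])=1$ (\cite[Lemma 5.5]{KY1} --- the one genuinely deep input, whose proof contains the cone-matching work sketched above), and then recovers the fiber sum by orthogonality of characters, the hypothesis that $\mathfrak q$ splits completely in $H$ entering exactly through the condition $\chi([\mathfrak q])=1$. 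So, as written, your outline replaces the deep ingredient by a false identity; making it honest would amount to reconstructing \cite[Lemma 5.5]{KY1}.
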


\begin{proof}
We put $W(c,\iota):=W(\iota(c))$ in \cite[(4.3)]{KY1}, $V(c,\iota):=V(\iota(c))$ in \cite[(1.6)]{KY1}, and $X(c,\iota):=G(c,\iota)+W(c,\iota)+V(c,\iota)$.
Here we consider the ideal class group $C_{\iota(\mathfrak f)}$ of $\iota(F)$ modulo $\iota(\mathfrak f)$.
Then $\iota(c)$ denotes the image of $c\in C_\mathfrak f$ in $C_{\iota(\mathfrak f)}$ under the natural map.
By the definition \cite[(4.3)]{KY1} and \cite[Appendix I, Theorem]{KY2}, we have $W(c,\iota),V(c,\iota) \in \oq\log \oq^\times$.
Moreover \cite[Lemma 5.5]{KY1} states that 
\begin{align*}
\sum_{c \in C_{\mathfrak f\mathfrak q}} \chi_\mathfrak q(c) X(c,\iota) \in \oq\log \oq^\times \quad (\chi \in \hat C_\mathfrak f,\ \chi([\mathfrak q])=1).
\end{align*}
Here $\chi_\mathfrak q$, $[\mathfrak q]$ denote the composite map $C_{\mathfrak f\mathfrak q} \ra C_\mathfrak f \st{\chi}\ra \mathbb C^\times$,
the ideal class $\in C_\mathfrak f$ of $\mathfrak q$, respectively.
Therefore, when $H$ is the fixed subfield under $(\frac{H_\mathfrak f/F}{\mathfrak q})$, it follows from the orthogonality of characters.
The general case follows from this case immediately.
\end{proof}

\begin{dfn} \label{Yp}
Let $H$ be an intermediate field of $H_\mathfrak f/F$.
Assume that $\mfpi \nmid \mathfrak f$ and that $\mfpi$ splits completely in $H/F$. 
Then we define 
\begin{align*}
Y_p(\tau,\iota):=\sum_{c \in C_{\fpi},\ \mathrm{Art}(\overline c)|_H=\tau}G_p(c,\iota)-[\sum_{c \in C_{\fpi},\ \mathrm{Art}(\overline c)|_H=\tau}G(c,\iota)]_p 
\quad (\tau \in \mathrm{Gal}(H/F)).
\end{align*}
When $\iota=\mathrm{id}$, we drop the symbol $\iota$: $Y_p(\tau):=Y_p(\tau,\mathrm{id})$.
\end{dfn}

By \cite[Proposition 5.6]{KY1} (and the orthogonality of characters), 
we see that $Y_p(\tau,\iota)$ depends only on $H,\mathfrak f,\tau,\iota$, not on $D$, $\mathfrak a_c$'s.
We formulated a conjecture \cite[Conjecture A$'$]{KY1}, which is equivalent to the following Conjecture \ref{KYc} by \cite[Proposition 6-(ii)]{Ka3}.

\begin{cnj} \label{KYc}
Let $H_\mathfrak f/H/F$ be as above: we assume that
\begin{center}
$\mfpi$ does not divide $\mathfrak f$, splits completely in $H/F$. 
\end{center}
We take a lift $\tilde\iota\colon H\ra \mathbb C_p$ of $\iota\colon F\ra \mathbb C_p$
and put $\mathfrak p_{H,\tilde\iota}:=\{z\in \mathcal O_H \mid |\tilde\iota(z)|_p<1\}$.
Let $\alpha_{H,\tilde\iota}$ be a generator of the principal ideal $\mathfrak p_{H,\tilde\iota}^{h_H}$, where $h_H$ denotes the class number.
Then we have
\begin{align*}
Y_p(\tau,\iota)=\frac{-1}{h_H}\sum_{c\in C_\mathfrak f}\zeta(0,c^{-1})\log_p \tilde\iota\left(\alpha_{H,\tilde\iota}^{\tau \mathrm{Art}(c)}\right).
\end{align*}
\end{cnj}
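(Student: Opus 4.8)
The plan is to prove Conjecture \ref{KYc} by establishing a $p$-adic Kronecker limit formula that interprets the analytic invariant $Y_p(\tau,\iota)$ as the $p$-adic logarithm of a Gross--Stark unit, and then to pin down the value by comparison with the (already largely proven) Conjecture \ref{GSc}. The right-hand side equals $\log_p\tilde\iota$ of the element $\beta_\tau:=\prod_{c\in C_\mathfrak f}(\alpha_{H,\tilde\iota}^{\tau\mathrm{Art}(c)})^{-\zeta(0,c^{-1})/h_H}$, a well-defined element of $\oq^\times\otimes\mathbb Q$ since the partial zeta values $\zeta(0,c^{-1})$ are rational. So the first task is to identify $\beta_\tau$, up to roots of unity, with the $\mathfrak p$-unit $u^\tau$ characterized by (\ref{GSu}). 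This is a Brumer--Stark-type computation: the divisor of the weighted product is supported on primes above $p$ by the vanishing order of the partial zeta functions, and the splitting of $\mfpi$ in $H/F$ guarantees that it is concentrated at the single prime $\mathfrak p_{H,\tilde\iota}$ selected by the embedding $\tilde\iota$.

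Next I would compute $Y_p(\tau,\iota)$ itself. By Definition \ref{Yp} it is the ``difference'' of the $p$-adic sum $\sum_c G_p(c,\iota)$ and the image $[\sum_c G(c,\iota)]_p$ of the classical sum. The classical sum is governed by Yoshida's Shintani--Kronecker limit formula, which expresses $\sum_c G(c,\iota)$ through $\zeta_S'(0,\tau)$ and hence, by (\ref{GSu}), through $\log|u^\tau|$; applying $[\ ]_p$ kills the transcendental $\pi$-contribution and replaces $\log$ by $\log_p$. The $p$-adic sum $\sum_c G_p(c,\iota)$ is the derivative at $s=0$ of the Cassou-Nogu\`es $p$-adic multiple zeta function, whose special values are tied to the classical ones by the interpolation (\ref{interpolation}). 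The heart of the matter is to show that the $\mathrm{ord}_p$- and $\theta_p$-twists in (\ref{interpolation}) assemble, after summation over the cone data and over $c$, into exactly the $\log_p$ of the unit produced above, so that the two contributions combine to the conjectural value.

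A complementary route is supplied by this paper itself: Theorem \ref{main} expresses $\log_p(u_\eta(\mathfrak b,\mathcal D_\mathfrak f))$ as a difference of values of $Y_p$, so Conjecture \ref{KYc} is linked, via Dasgupta's invariant, to a statement about the multiplicative integral. Combined with Dasgupta's Conjecture \ref{Dc} this yields the consistency recorded in Corollary \ref{crlofmain}, but not a proof of either, since the difference structure in Theorem \ref{main} and the openness of Conjecture \ref{Dc} prevent one from isolating $Y_p$ at a single $\tau$.

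The main obstacle is that the unconditional input currently available --- the theorem of Dasgupta--Darmon--Pollack \cite{DDP} toward Conjecture \ref{GSc} --- controls only the norm $\log_p N_{K_\mathfrak P/\mathbb Q_p}(u^\tau)$, i.e.\ the \emph{sum} of the $\log_p\tilde\iota(u^\tau)$ over all lifts $\tilde\iota$ of $\iota$. Conjecture \ref{KYc}, by contrast, is the refined statement for a \emph{single} embedding. The genuine difficulty is therefore to separate the individual embedding contributions from their trace; this is precisely the content of the refinement and cannot be extracted from the norm-level Gross--Stark conjecture alone. Establishing the required single-embedding $p$-adic Kronecker limit formula for $Y_p$, rather than only for its trace, is where the real work and the present open difficulty lie.
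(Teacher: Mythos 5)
The statement you were asked to prove is Conjecture \ref{KYc}, and it has \emph{no proof in the paper}: it is an open conjecture, equivalent to Conjecture A$'$ of \cite{KY1} by \cite[Proposition 6-(ii)]{Ka3}. What the paper actually proves is Theorem \ref{main}, the unconditional identity $\log_p(u_\eta(\mathfrak b,\mathcal D_\mathfrak f))=-Y_p((\tfrac{H/F}{\mathfrak b}))+N\eta\,Y_p((\tfrac{H/F}{\mathfrak b\eta^{-1}}))$, and from it only the consistency statement Corollary \ref{crlofmain}: Conjecture \ref{KYc} \emph{implies} a formula for $u_\eta(\mathfrak b,\mathcal D_\mathfrak f)^{h_H}$ modulo $\ker\log_p$. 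Neither result proves, nor is claimed to prove, Conjecture \ref{KYc}; so there is no proof in the paper against which to compare your argument.

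Judged as a proof, your proposal has a genuine gap, but it is one you name yourself and it is, at present, irreducible: the entire content of the conjecture is the ``heart of the matter'' of your second paragraph, namely that for a \emph{single} embedding $\iota$ the twists in the interpolation formula (\ref{interpolation}) assemble $\sum_c G_p(c,\iota)-[\sum_c G(c,\iota)]_p$ into $\log_p\tilde\iota$ of the Stark-unit expression. None of the inputs you invoke supplies this step: \cite{DDP} controls only $\log_p N_{K_\mathfrak P/\mathbb Q_p}(u^\tau)$, i.e.\ the sum of $\log_p\tilde\iota(u^\tau)$ over all lifts $\tilde\iota$ of $\iota$; Yoshida's limit formula expresses $\zeta'(0,c)$ only as the sum over all embeddings of $X(c,\iota)=G(c,\iota)+W(c,\iota)+V(c,\iota)$, so it likewise cannot isolate the single-$\iota$ quantity $Y_p(\tau,\iota)$; and, as you correctly observe, Theorem \ref{main} together with Conjecture \ref{Dc} (itself open) would at best constrain differences of $Y_p$-values modulo $\ker\log_p$, conditionally. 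Your closing paragraph draws the right conclusion --- the single-embedding refinement is exactly what remains open --- so your text is an accurate assessment of the status of the conjecture rather than a proof of it, and it must not be presented as completing the argument.
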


\begin{rmk}
Roughly speaking, the above conjecture states a relation between 
the ratios $[$ $p$-adic multiple gamma functions $:$ multiple gamma functions $]$ and Stark units associated with the finite place $\mathfrak p_\iota$.
We also studied a relation between the same ratios and Stark units associated with real places in {\rm \cite{Ka3}}.
We found a more significant relation 
between the ratios $[$ $p$-adic gamma function $:$ gamma function $]$ and cyclotomic units in {\rm \cite{Ka2}}.
\end{rmk}

We rewrite the definition of $Y_p$ for later use.

\begin{dfn} \label{zetaR}
Let $R$ be a subset of $F_+$. 
We assume that $R$ can be expressed in the following form:
\begin{align*}
R=\coprod_{i=1}^k\{(\bm x_i+\bm m) \ttt \bm v_i \mid \bm m \in \nni^{r_i}\} \quad (\bm x_i \in \mathbb Q_+^{r_i},\ \bm v_i \in F_+^{r_i}).
\end{align*}
\begin{enumerate}
\item We define
\begin{align*}
\zeta_\iota(s,R)&:=\sum_{z \in R} \iota(z)^{-s}:=\sum_{i=1}^k \zeta(s,\iota(\bm v_i),\iota(\bm x_i \ttt \bm v_i)), \\
L\Gamma_\iota(R)&:=\frac{\partial}{\partial s}\zeta_\iota(s,R)|_{s=0}=\sum_{i=1}^k \log \Gamma(\iota(\bm x_i \ttt \bm v_i),\iota(\bm v_i)). 
\end{align*}
\item Additionally we assume that each $(\iota(\bm x_i \ttt \bm v_i),\iota(\bm v_i))$ satisfies {\rm (\ref{cond})}.
Then there exists the $p$-adic interpolation function
\begin{align*}
\zeta_{\iota,p}(s,R):= \sum_{i=1}^k \zeta_p(s,\iota(\bm v_i),\iota(\bm x_i \ttt \bm v_i))
\end{align*}
of $\zeta_{\iota}(s,R)$. We define
\begin{align*}
L\Gamma_{\iota,p}(R):=\frac{\partial}{\partial s}\zeta_{\iota,p}(s,R)|_{s=0}=\sum_{i=1}^k L\Gamma_p(\iota(\bm x_i \ttt \bm v_i),\iota(\bm v_i)).
\end{align*}
\end{enumerate}
When $\iota=\mathrm{id}$, we drop the symbol $\iota$.
\end{dfn}

It follows that, for any Shintani domain $D$ $\bmod E_+$ and for any integral ideals $\mathfrak a_c$ satisfying $\mathfrak a_c\mathfrak f \in \pi(c)$, we have 
\begin{align} \label{rewrite}
Y_p(\tau,\iota)=\sum_{c \in C_{\fpi},\ \mathrm{Art}(\overline c)|_H=\tau}L\Gamma_{\iota,p}(R_c)
-[\sum_{c \in C_{\fpi},\ \mathrm{Art}(\overline c)|_H=\tau}L\Gamma_\iota(R_c)]_p,
\end{align}
where we put $R_c:=\{z \in D \mid \mathcal O_F \supset z \mathfrak a_c \fpi \in c\}$.
We will use the following properties of the classical or $p$-adic multiple gamma functions in the proof of Theorem \ref{main}. 

\begin{prp} \label{eZ}
\begin{enumerate}
\item Let $R$ be as in {\rm Definition \ref{zetaR}-(i)}, $\alpha \in F_+$. Then we have
\begin{align*}
L\Gamma_\iota(R)-L\Gamma_\iota(\alpha R)=\zeta_\iota(0,R)\log \iota(\alpha).
\end{align*}
\item Let $R$ be as in {\rm Definition \ref{zetaR}-(ii)}, $\alpha \in F_+$. Then we have
\begin{align*}
L\Gamma_{\iota,p}(R)-L\Gamma_{\iota,p}(\alpha R)=\zeta_\iota(0,R)\log_p \iota(\alpha).
\end{align*}
\end{enumerate}
\end{prp}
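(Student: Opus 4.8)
The plan is to reduce both identities to a single homogeneity property of Barnes' zeta function under simultaneous scaling of the basis and the base point, and then to differentiate at $s=0$. For (i), observe first that if $R$ is presented by the data $(\bm x_i,\bm v_i)$ as in Definition \ref{zetaR}, then $\alpha R$ is presented by $(\bm x_i,\alpha\bm v_i)$, since $\alpha(\bm x_i+\bm m)\ttt\bm v_i=(\bm x_i+\bm m)\ttt(\alpha\bm v_i)$. Writing $\lambda:=\iota(\alpha)>0$, the defining series gives the homogeneity $\zeta(s,\lambda\bm v,\lambda z)=\lambda^{-s}\zeta(s,\bm v,z)$, hence $\zeta_\iota(s,\alpha R)=\iota(\alpha)^{-s}\zeta_\iota(s,R)$. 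Differentiating at $s=0$ by the product rule, using $\frac{\partial}{\partial s}\iota(\alpha)^{-s}|_{s=0}=-\log\iota(\alpha)$ together with the definition of $L\Gamma_\iota$, yields $L\Gamma_\iota(\alpha R)=L\Gamma_\iota(R)-\zeta_\iota(0,R)\log\iota(\alpha)$, which is (i).

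For (ii), I first check that $\alpha R$ again satisfies {\rm (\ref{cond})}: since $\mathrm{ord}_p$ is a homomorphism, $\mathrm{ord}_p(\iota(\alpha)\iota(z))=\mathrm{ord}_p\iota(\alpha)+\mathrm{ord}_p\iota(z)$, so the inequalities $\mathrm{ord}_p(\iota(\bm x_i\ttt\bm v_i))<\mathrm{ord}_p(\iota(\bm v_i))$ are preserved under scaling by $\iota(\alpha)$; thus $L\Gamma_{\iota,p}(\alpha R)$ is defined. The goal is the $p$-adic analogue of the homogeneity, namely $\zeta_{\iota,p}(s,\alpha R)=\beta^{-s}\zeta_{\iota,p}(s,R)$, where $\beta:=p^{-\mathrm{ord}_p\iota(\alpha)}\theta_p(\iota(\alpha))^{-1}\iota(\alpha)$ is the one-unit part of $\iota(\alpha)$ singled out by {\rm (\ref{ordtheta})}. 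To prove this I evaluate both sides at $s=-m$ $(m\in\nni)$ and apply the interpolation formula {\rm (\ref{interpolation})} to each base point: combining $\zeta(-m,\lambda\bm v,\lambda z)=\lambda^m\zeta(-m,\bm v,z)$ with the multiplicativity of $\mathrm{ord}_p$ and $\theta_p$ shows that the normalizing factors attached to $\lambda z$ split as $p^{-\mathrm{ord}_p(\lambda z)m}\theta_p(\lambda z)^{-m}\lambda^m=\beta^m\cdot p^{-\mathrm{ord}_p(z)m}\theta_p(z)^{-m}$, whence $\zeta_p(-m,\lambda\bm v,\lambda z)=\beta^m\zeta_p(-m,\bm v,z)$; that is, the two sides agree at every $s=-m$.

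The last step is to upgrade this agreement on the set $\{-m\mid m\in\nni\}$ to an identity of functions on $\mathbb Z_p$. Since $\beta$ is a one-unit, $\beta^{-s}=\exp(-s\log_p\beta)$ is $p$-adic analytic in $s\in\mathbb Z_p$; as $s\mapsto\zeta_{\iota,p}(s,R)$ and $s\mapsto\zeta_{\iota,p}(s,\alpha R)$ are continuous on $\mathbb Z_p$ (minus the excluded integers) by construction, and $\{-m\}$ is dense, the identity $\zeta_{\iota,p}(s,\alpha R)=\beta^{-s}\zeta_{\iota,p}(s,R)$ holds for all admissible $s$. Differentiating at $s=0$ and using $\zeta_{\iota,p}(0,R)=\zeta_\iota(0,R)$ (the $m=0$ case of {\rm (\ref{interpolation})}) together with $\log_p\beta=\log_p\iota(\alpha)$ (because $\log_p$ annihilates $p$ and roots of unity) gives $L\Gamma_{\iota,p}(\alpha R)=L\Gamma_{\iota,p}(R)-\zeta_\iota(0,R)\log_p\iota(\alpha)$, which is (ii).

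The main obstacle is precisely this $p$-adic homogeneity: one must verify that the bookkeeping of the normalizing factors $p^{-\mathrm{ord}_p m}\theta_p^{-m}$ attached to the individual base points collapses into a single one-unit $\beta$ depending only on $\iota(\alpha)$, and that $\beta^{-s}$ is a genuine $p$-adic analytic function of $s$ so that the density/continuity argument legitimately transports the identity from the negative integers to all of $\mathbb Z_p$. Once this is in place the classical statement (i) is essentially formal, and (ii) follows by the same differentiation at $s=0$.
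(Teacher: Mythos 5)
Your proof is correct and takes essentially the same route as the paper: the paper's entire proof is the single observation that both assertions follow from the homogeneity $\zeta_\iota(s,\alpha R)=\iota(\alpha)^{-s}\zeta_\iota(s,R)$, which is exactly your argument. The $p$-adic bookkeeping you spell out (the one-unit $\beta$, interpolation at the negative integers, the density/continuity transfer, and $\log_p\beta=\log_p\iota(\alpha)$) is precisely what the paper's word ``immediately'' suppresses.
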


\begin{proof}
The assertions follow from $\zeta_\iota(s,\alpha R)=\iota(\alpha)^{-s}\zeta_\iota(s,R)$ immediately.
\end{proof}

We also recall Shintani's multiple zeta functions in {\rm \cite[(1.1)]{Sh}} which we need in subsequent sections.

\begin{dfn} \label{Sz}
\begin{enumerate}
\item Let $A=(a_{ij})$ be an $(l\times r)$-matrix with $a_{ij} \in \prn$, $\bm x \in \prn^r$, $\bm \chi=(\chi_1,\dots,\chi_r) \in (\mathbb C^\times)^r$ with 
$|\chi_i|\leq 1$. Then Shintani's multiple zeta function is defined as
\begin{align*}
\zeta(s,A,\bm x,\bm \chi):=\sum_{(m_1,\dots,m_r) \in \nni^r} \left(\prod_{j=1}^r\chi_j^{m_j}\right)\left(\prod_{i=1}^l \left(\sum_{j=1}^ra_{ij}(m_j+x_j)\right)\right)^{-s}.
\end{align*}
This series converges for $\Re(s)>\frac{r}{l}$, has a meromorphic continuation to the whole $s$-plane, is analytic at $s=0$.
\item Let $\bm x$, $\bm \chi$ be as in {\rm (i)}. For $\bm v=(v_1,\dots,v_r) \in F_+^r$, we consider two kinds of Shintani's multiple zeta functions: 
\begin{enumerate}
\item Shintani's multiple zeta function with $l=1$:
\begin{align*}
\zeta(s,\bm v,\bm x,\bm \chi)=\sum_{(m_1,\dots,m_r) \in \nni^r}\left(\prod_{j=1}^r\chi_j^{m_j}\right)\left(\sum_{j=1}^r v_j(m_j+x_j)\right)^{-s}.
\end{align*}
Here we consider $v_i \in F_+ \st{\mathrm{id}}\hookrightarrow \prn$.
\item Let $A$ be the $(n\times r)$-matrix whose raw vectors are $\iota(\bm v_i)$ $(\iota \in \HFR,\ n:=[F:\mathbb Q])$. Then we put
\begin{align*}
\zeta_N(s,\bm v,\bm x,\bm \chi):=\zeta(s,A,\bm x,\bm \chi)
=\sum_{(m_1,\dots,m_r) \in \nni^r}\left(\prod_{j=1}^r\chi_j^{m_j}\right)N\left(\sum_{j=1}^r v_j(m_j+x_j)\right)^{-s}.
\end{align*}
\end{enumerate}
\item Let $R$ be as in {\rm Definition \ref{zetaR}-(i)}. We define 
\begin{align*}
\zeta_N(s,R):=\sum_{z \in R} N z^{-s}:=\sum_{i=1}^k \zeta_N(s,\bm v_i,\bm x_i,(1,\dots,1)).
\end{align*}
\end{enumerate}
\end{dfn}

\section{$p$-adic measures associated with zeta values}

We consider the following two kinds of integration.

\begin{dfn}
Let $K$ be a finite extension of $\mathbb Q_p$, $O$ the ring of integers of $K$, $P$ the maximal ideal of $O$. 
\begin{enumerate}
\item We say $\nu$ is a $p$-adic measure on $O$ if for each open compact subset $U \subset O$,
it takes the value $\nu(U)\in K$ satisfying 
\begin{enumerate}
\item $\nu(U\coprod  U')=\nu(U)+\nu(U')$ for disjoint open compact subsets $U,U'$.
\item $|\nu(U)|_p$'s are bounded.
\end{enumerate}
We say a $p$-adic measure $\nu$ is a $\mathbb Z$-valued measure if $\nu(U) \in \mathbb Z$.
\item Let $\nu$ be a $p$-adic measure, $f\colon O\ra O$ a continuous map. We define 
\begin{align*}
\int_Of(x)d\nu(x):=\lim_{\leftarrow}\sum_{\overline a \in O/P^m}\nu(f^{-1}(a+P^m))f(a) \in \lim_{\leftarrow} O/P^m=O.
\end{align*}
\item Let $\nu$ be a $\mathbb Z$-valued measure, $f\colon (O-P^e) \ra (O-P^e)$ a continuous map $(e \in \mathbb N)$. We define 
\begin{align*}
\times\hspace{-1.1em}\int_{O-P^e}f(x)d\nu(x):=\lim_{\leftarrow}\prod_{\overline a\in (O-P^e)/(1+P^m)}f(a)^{\nu(f^{-1}(a+P^m))} \in O.
\end{align*}
\end{enumerate}
\end{dfn}

We recall the setting in \cite{Da}.
Let $F$ be a totally real field of degree $n$, $\mathfrak f$ an integral ideal of $F$, 
$\mathfrak p:=\mathfrak p_{\mathrm{id}}$ the prime ideal corresponding to the $p$-adic topology on $F$ induced by 
$\mathrm{id}\colon F \hookrightarrow \mathbb C_p$.
We assume that $\mathfrak p \nmid \mathfrak f$.

\begin{dfn}[{\cite[Definitions 3.8, 3.9]{Da}}]
Let $\eta$ be a prime ideal of $F$. 
\begin{enumerate}
\item We say $\eta$ is good for a cone $C(v_1,\dots,v_r)$ if $v_i \in \mathcal O_F-\eta$ and if $N\eta$ is a rational prime (i.e., the residue degree $=1$).
\item We say $\eta$ is good for a Shintani set $D$ if it can be expressed as a finite disjoint union of cones for which $\eta$ is good. 
\end{enumerate}
\end{dfn}

\begin{dfn}[{\cite[Definitions 3.13, 3.16, Conjecture 3.21]{Da}}] \label{assump}
We take an element $\pi \in \mathcal O_{F,+}$, a prime ideal $\eta$, a Shintani domain $\mathcal D_\mathfrak f$ $\bmod E_{\mathfrak f,+}$ 
satisfying the following conditions.
\begin{enumerate}
\item Let $e$ be the order of $\mathfrak p$ in $C_\mathfrak f$.
We fix a generator $\pi \in \mathfrak p^e$ satisfying $\pi\in \mathcal O_{F,+}$, $\pi \equiv 1 \bmod \mathfrak f$. 
\item $N\eta \geq n+2$ and $(N\eta,\mathfrak f \mathfrak p)=1$. 
\item The residue degree of $\eta$ $=1$ and the ramification degree of $\eta$ $\leq N\eta-2$. 
\item $\eta$ is ``simultaneously'' good for $\mathcal D_\mathfrak f,\pi^{-1}\mathcal D_\mathfrak f$ in the following sense: 
There exist vectors $\bm v_j \in (\mathcal O_{F,+}- \eta)^{r(j)}$,
units $\epsilon_j\in E_{\mathfrak f,+}$ $(j\in J',\ |J'|<\infty)$ satisfying
\begin{align*}
\mathcal D_\mathfrak f=\coprod_{j\in J'} C(\bm v_j),
\quad \pi^{-1}\mathcal D_\mathfrak f=\coprod_{j\in J'} \epsilon_j C(\bm v_j).
\end{align*}
\end{enumerate}
\end{dfn}

\begin{rmk}
Dasgupta {\rm \cite{Da}} took a suitable set $T$ of prime ideals instead of one prime ideal $\eta$. In this article, we assume that $|T|=1$ for simplicity.
\end{rmk}

We denote by $F_\mathfrak p,\mathcal O_{F_\mathfrak p}$ the completion of $F$ at $\mathfrak p$, 
the ring of integers of $F_\mathfrak p$ respectively.

\begin{dfn}[{\cite[Definitions 3.13, 3.17]{Da}}] \label{ueta}
Let $\pi,\eta,\mathcal D_\mathfrak f$ be as in {\rm Definition \ref{assump}}, 
$\mathfrak b$ a fractional ideal of $F$ relatively prime to $\mathfrak f \mathfrak pN\eta$.
We put 
\begin{align*}
F^\times_\mathfrak f:=\{z\in F^\times\mid z\equiv 1\ {\bmod}^*\ \mathfrak f\}.
\end{align*}
\begin{enumerate}
\item For an open compact subset $\mathcal U \subset \mathcal O_{F_\mathfrak p}$, a Shintani set $D$, we put 
\begin{align*}
\nu(\mathfrak b,D,\mathcal U)
&:=\zeta_N(0,F^\times_\mathfrak f\cap\mathfrak b^{-1}\cap D\cap\mathcal U), \\
\nu_\eta(\mathfrak b,D,\mathcal U)&:=
\nu(\mathfrak b,D,\mathcal U)-N\eta \nu(\mathfrak b\eta^{-1},\mathcal D,\mathcal U).
\end{align*}
Here $\zeta_N(s,R)$ is defined in {\rm Definition \ref{Sz}}.
By {\rm \cite[Proposition 3.12]{Da}} we see that 
\begin{itemize}
\item When $\eta$ is good for $D$, we have $\nu_\eta(\mathfrak b,D,\mathcal U) \in \mathbb Z[N\eta^{-1}]$.
\item  When $\eta$ is good for $D$ and $N\eta \geq n +2$, we have $\nu_\eta(\mathfrak b,D,\mathcal U) \in \mathbb Z$.
\end{itemize}
\item Assume that $\eta$ is good for $D$ and that $\eta \nmid p$. 
We define a $p$-adic measure $\nu_\eta(\mathfrak b,D)$ on $\mathcal O_{F_\mathfrak p}$ by 
\begin{align*}
\nu_\eta(\mathfrak b,D)(\mathcal U):=\nu_\eta(\mathfrak b,D,\mathcal U).
\end{align*}
Under the assumption of {\rm Definition \ref{assump}}, $\nu_\eta(\mathfrak b,\mathcal D_\mathfrak f)$ is a $\mathbb Z$-valued measure.
\item For $\tau\in \mathrm{Gal}(H_\mathfrak f/F)$, we put 
\begin{align*}
\zeta_\mathfrak f(s,\tau)&
:=\sum_{\mathfrak a\subset\mathcal O_F,\ (\frac{H_\mathfrak f/F}{\mathfrak a})=\tau,\ (\mathfrak{a,f})=1} N\mathfrak a^{-s}, \\
\zeta_{\mathfrak f,\eta}(s,\tau)&:=\zeta_\mathfrak f(s,\tau)
-N\eta^{1-s}\zeta_\mathfrak f(s,\tau (\tfrac{H_\mathfrak f/F}{\eta^{-1}} )).
\end{align*}
Here $H_\mathfrak f$ denotes the narrow ray class field modulo $\mathfrak f$.
\item We define
\begin{align*}
\epsilon_\eta(\mathfrak b,\mathcal D_\mathfrak f,\pi)&:=\prod_{\epsilon\in E_{\mathfrak f,+}}
\epsilon^{\nu_\eta(\mathfrak b,\epsilon\mathcal D_\mathfrak f\cap\pi^{-1}\mathcal D_\mathfrak f,\mathcal O_{F_\mathfrak p})} \in E_{\mathfrak f,+}, \\
u_\eta(\mathfrak b,\mathcal D_\mathfrak f)&:=
\epsilon_\eta(\mathfrak b,\mathcal D_\mathfrak f,\pi)
\pi^{\zeta_{\mathfrak f,\eta}(0,(\frac{H_\mathfrak f/F}{\mathfrak b}))}
\times\hspace{-1.1em}\int_{\mathbf O}x\ d\nu_\eta(\mathfrak b,\mathcal D_\mathfrak f,x)\in F_\mathfrak p^\times,
\end{align*}
where $\mathbf O:=\mathcal O_{F_\mathfrak p}-\pi\mathcal O_{F_\mathfrak p}$.
The product in the first line is actually a finite product 
since $\nu_\eta(\mathfrak b,\epsilon\mathcal D_\mathfrak f\cap\pi^{-1}\mathcal D_\mathfrak f,\mathcal O_{F_\mathfrak p})=0$
for all but finite $\epsilon\in E_{\mathfrak f,+}$.
\end{enumerate}
\end{dfn}

\begin{cnj}[{\cite[Conjecture 3.21]{Da}}] \label{Dc}
Let $\pi,\eta,\mathcal D_\mathfrak f$ be as in {\rm Definition \ref{assump}},
$H$ the fixed subfield of $H_\mathfrak f$ under $(\frac{H_\mathfrak f/F}{\mathfrak p})$.
\begin{enumerate}
\item Let $\tau \in \mathrm{Gal}(H/F)$. 
For a fractional ideal $\mathfrak b$ relatively prime to $\mathfrak f \mathfrak pN\eta$ satisfying $(\frac{H/F}{\mathfrak b})=\tau$, we put
\begin{align*}
u_\eta(\tau):=u_\eta(\mathfrak b,\mathcal D_\mathfrak f).
\end{align*}
Then $u_\eta(\tau)$ depends only on $\mathfrak f,\tau,\eta$, not on the choices of $\mathcal D_\mathfrak f,\mathfrak b$.
\item For any $\tau \in \mathrm{Gal}(H/F)$, $u_\eta(\tau)$ is a $\mathfrak p$-unit of $H$ satisfying $u_\eta(\tau)\equiv 1 \bmod \eta$.
\item For any $\tau,\tau' \in \mathrm{Gal}(H/F)$, we have $u_\eta(\tau\tau')=u_\eta(\tau)^{\tau'}$.
\end{enumerate}
\end{cnj}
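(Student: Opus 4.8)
The plan is to handle the three assertions separately, since (i) and (iii) are structural identities provable by direct manipulation of the Shintani--zeta data, whereas (ii) carries the genuine arithmetic depth of the conjecture.

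For (i) I would establish independence of $\mathcal D_\mathfrak f$ and of $\mathfrak b$ in turn. The middle factor $\pi^{\zeta_{\mathfrak f,\eta}(0,(\frac{H_\mathfrak f/F}{\mathfrak b}))}$ does not involve $\mathcal D_\mathfrak f$, so the task is to show that passing to a second Shintani domain $\mathcal D'_\mathfrak f\bmod E_{\mathfrak f,+}$ changes the multiplicative integral $\times\!\int_{\mathbf O}x\,d\nu_\eta(\mathfrak b,\mathcal D_\mathfrak f,x)$ by exactly the reciprocal of the change in the correction unit $\epsilon_\eta(\mathfrak b,\mathcal D_\mathfrak f,\pi)$. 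Since $\mathcal D_\mathfrak f$ and $\mathcal D'_\mathfrak f$ tile the same quotient $F\otimes\prn/E_{\mathfrak f,+}$, they differ by finitely many unit-translations of cones, and I would track how $\nu_\eta(\mathfrak b,\cdot,\mathcal U)$ behaves under $D\mapsto\epsilon D$ through Definition \ref{ueta}-(i) and the scaling behaviour of the Shintani zeta from Definition \ref{Sz}; the resulting powers of $\epsilon$ are precisely those absorbed by $\epsilon_\eta$. Independence of $\mathfrak b$ I would reduce to the two moves generating the indeterminacy, $\mathfrak b\mapsto(\alpha)\mathfrak b$ with $\alpha\in F^\times_\mathfrak f$ and $\mathfrak b\mapsto\mathfrak p\mathfrak b$ (legitimate because $\mathfrak p$ splits completely in $H$). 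The first rescales the lattice $\mathfrak b^{-1}$ and is absorbed by the scaling of $\nu_\eta$ combined with a domain change as above; the second is controlled by the factor $\pi^{\zeta_{\mathfrak f,\eta}(0,\cdot)}$, exploiting $\mathfrak p^e=(\pi)$ and $\pi\equiv1\bmod\mathfrak f$, so that the shift of the Artin argument in $\mathrm{Gal}(H_\mathfrak f/F)$ is compensated.

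For (iii), assuming (ii) has already placed $u_\eta(\tau)$ inside $H^\times$, I would pick $\mathfrak c$ prime to $\mathfrak f\mathfrak pN\eta$ with $(\frac{H/F}{\mathfrak c})=\tau'$. By (i) we may rewrite $u_\eta(\tau\tau')=u_\eta(\mathfrak b\mathfrak c,\mathcal D_\mathfrak f)$, so the claim reduces to $u_\eta(\mathfrak b\mathfrak c,\mathcal D_\mathfrak f)=u_\eta(\mathfrak b,\mathcal D_\mathfrak f)^{\tau'}$. The decisive input is again that $\mathfrak p$ splits completely in $H$: under the fixed embedding $\oq\hookrightarrow\mathbb C_p$ this pins down the Artin action of $\tau'$ on $u_\eta(\mathfrak b,\mathcal D_\mathfrak f)\in H^\times\subset F_\mathfrak p^\times$, and I would match it term by term against the effect on the zeta data of replacing $\mathfrak b$ by $\mathfrak b\mathfrak c$, namely the shift of the argument of $\zeta_{\mathfrak f,\eta}(0,\cdot)$ by $\mathrm{Art}(\mathfrak c)$ and the corresponding transformation of the measure $\nu_\eta(\mathfrak b,\mathcal D_\mathfrak f)$.

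The main obstacle is (ii). The congruence $u_\eta(\tau)\equiv1\bmod\eta$ I expect to follow comparatively formally from the $\eta$-smoothing: $\nu_\eta=\nu(\mathfrak b,\cdot)-N\eta\,\nu(\mathfrak b\eta^{-1},\cdot)$ is designed so that, after reduction modulo $\eta$, the weights cancel and the multiplicative integral collapses to $1$. The hard part is global: $u_\eta(\tau)$ is defined a priori only in $F_\mathfrak p^\times$, and one must show it is an algebraic number lying in $H$ and a $\mathfrak p$-unit there. This is not accessible to the Shintani machinery alone; the route I would attempt is to feed Theorem \ref{main}, which expresses $\log_p u_\eta(\mathfrak b,\mathcal D_\mathfrak f)$ through the $p$-adic log-gamma invariants $Y_p$, into Conjecture \ref{KYc} (or the available partial results toward the Gross--Stark conjecture, e.g.\ \cite{DDP}) in order to identify $\log_p$ of a genuine Gross--Stark unit. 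This would determine $u_\eta(\tau)$ only up to $\ker\log_p$ and does not by itself yield global rationality; closing that gap is exactly the depth of Conjecture \ref{Dc} and lies beyond the zeta-theoretic tools assembled in this paper.
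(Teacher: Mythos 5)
The statement you are trying to prove is a \emph{conjecture}, not a theorem: it is Dasgupta's Conjecture 3.21, quoted verbatim in the paper as Conjecture \ref{Dc}, and the paper offers no proof of it. The only results the paper actually establishes in its direction are Theorem \ref{main} and Corollary \ref{crlofmain}, which show that Conjecture \ref{Dc} is \emph{consistent} with Conjecture \ref{KYc} modulo $\ker\log_p$ --- that is, they compute $\log_p u_\eta(\mathfrak b,\mathcal D_\mathfrak f)$ in terms of $Y_p$, which determines $u_\eta$ only up to $\ker\log_p$ and says nothing about algebraicity, rationality over $H$, the congruence modulo $\eta$, or exact (rather than $\log_p$-level) independence of choices. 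Your own final paragraph concedes precisely this for part (ii), so the honest conclusion of your proposal is that the statement cannot be proved with the tools at hand; presenting the rest as a proof is the wrong framing.

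Beyond the framing, the claims that (i) and (iii) are ``structural identities provable by direct manipulation'' do not hold up. For (iii), your argument explicitly assumes (ii) --- that $u_\eta(\tau)$ lies in $H^\times$ --- before the Galois action $\tau'$ even makes sense, so it is circular: the Shimura-reciprocity-type equivariance $u_\eta(\tau\tau')=u_\eta(\tau)^{\tau'}$ cannot be checked ``term by term against the zeta data,'' because the left side transforms by a global Galois action that is invisible to the local object $u_\eta(\mathfrak b\mathfrak c,\mathcal D_\mathfrak f)\in F_\mathfrak p^\times$; this equivariance is exactly part of what is conjectured. For (i), while the measure $\nu_\eta$ is indeed $E_{\mathfrak f,+}$-equivariant at the level of sets ($F^\times_\mathfrak f\cap\mathfrak b^{-1}\cap\epsilon D\cap\epsilon\mathcal U=\epsilon(F^\times_\mathfrak f\cap\mathfrak b^{-1}\cap D\cap\mathcal U)$ and $\zeta_N$ is norm-invariant), the multiplicative integral is taken over the \emph{fixed} region $\mathbf O$ with the \emph{fixed} integrand $x$, so a change of domain does not merely permute cones: whether $\epsilon_\eta(\mathfrak b,\mathcal D_\mathfrak f,\pi)$ compensates the change \emph{exactly} in $F_\mathfrak p^\times$, and not just after applying $\log_p$, is open. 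The paper itself flags this in the proof of Theorem \ref{main} (``the independence on the choice of $\mathcal D_\mathfrak f$ is also discussed in \cite[\S 5.2]{Da} \emph{under certain conditions}''), and the unconditional statement it proves there --- via Proposition \ref{eZ} and the relation $F^\times_\mathfrak f\cap\mathfrak b^{-1}\cap u_jC(\bm v_j)\cap\mathfrak p^k\mathcal O_{F_\mathfrak p}^\times=u_j(F^\times_\mathfrak f\cap\mathfrak b^{-1}\cap C(\bm v_j)\cap\mathfrak p^k\mathcal O_{F_\mathfrak p}^\times)$ --- is again only an identity of logarithms, i.e.\ well-definedness of $Y_p$ in the sense of \cite[Proposition 5.6]{KY1}. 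So the correct deliverable here is not a proof but the $\log_p$-level compatibility, which is Theorem \ref{main}.
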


\section{The main results}

We keep the notation in the previous sections: 
Let $F$ be a totally real field of degree $n$, $H_\mathfrak f$ the narrow ray class field modulo $\mathfrak f$.
We assume that the prime ideal $\mathfrak p$ corresponding to the $p$-adic topology on $F$ does not divide $\mathfrak f$.
Let $H$ be the fixed subfield of $H_\mathfrak f$ under $(\frac{H_\mathfrak f/F}{\mathfrak p})$.
For $\tau\in \mathrm{Gal}(H/F)$, let $Y_p(\tau):=Y_p(\tau,\mathrm{id})$ be as in Definition \ref{Yp}.
For a fractional ideal $\mathfrak b$ relatively prime to $\mathfrak f \mathfrak pN\eta$,
let $u_\eta(\mathfrak b,\mathcal D_\mathfrak f)$ be as in Definition \ref{ueta}-(iv).

\begin{thm} \label{main}
We have
\begin{align*}
\log_p(u_\eta(\mathfrak b,\mathcal D_\mathfrak f))
=-Y_p((\tfrac{H/F}{\mathfrak b}))
+N\eta\,Y_p((\tfrac{H/F}{\mathfrak b\eta^{-1}})) . 
\end{align*}
\end{thm}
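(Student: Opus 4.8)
The plan is to take $\log_p$ of the defining formula for $u_\eta(\mathfrak b,\mathcal D_\mathfrak f)$ in Definition \ref{ueta}-(iv) and to identify the resulting three terms. Since $\log_p\colon\mathbb C_p^\times\to\mathbb C_p$ is a continuous homomorphism that kills $\mu_{(p)}$ and satisfies $\log_p p=0$, it converts the multiplicative integral into the additive integral $\int_{\mathbf O}\log_p x\,d\nu_\eta$ and the product/power into sums:
\begin{align*}
\log_p u_\eta(\mathfrak b,\mathcal D_\mathfrak f)
&=\sum_{\epsilon\in\efp}\nu_\eta(\mathfrak b,\epsilon\mathcal D_\mathfrak f\cap\pi^{-1}\mathcal D_\mathfrak f,\mathcal O_{F_\mathfrak p})\log_p\epsilon \\
&\quad+\zeta_{\mathfrak f,\eta}(0,(\tfrac{H_\mathfrak f/F}{\mathfrak b}))\log_p\pi
+\int_{\mathbf O}\log_p x\,d\nu_\eta(\mathfrak b,\mathcal D_\mathfrak f,x).
\end{align*}
Thus everything reduces to evaluating the additive integral and checking that the two explicit terms are precisely the discrepancy between it and $-Y_p((\tfrac{H/F}{\mathfrak b}))+N\eta\,Y_p((\tfrac{H/F}{\mathfrak b\eta^{-1}}))$.

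The main point is to evaluate that integral. Because $\nu_\eta$ is manufactured from Shintani's norm zeta values $\zeta_N(0,\cdot)$ (Definitions \ref{Sz}, \ref{ueta}-(i)) while $Y_p$ is built from the one-variable Barnes zeta, I would first prove the bridging statement announced in the introduction (Lemma \ref{key}): on each cone of $\mathcal D_\mathfrak f$ the measure of a set $a+\mathfrak p^m$ is a value of $\zeta_N(0,\cdot)$, and---because the integrand $\log_p x=\log_p\langle x\rangle$ (with $\langle x\rangle:=\theta_p(x)^{-1}p^{-\mathrm{ord}_p x}x$) depends only on the $\mathrm{id}$-component of $x\in\mathbf O\subset\mathcal O_{F_\mathfrak p}$---this can be repackaged through Barnes' $\zeta(s,\bm v,z)$. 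Integrating $\log_p\langle x\rangle$ is then, up to sign, the derivative at $s=0$ of $s\mapsto\int_{\mathbf O}\langle x\rangle^{-s}\,d\nu_\eta$, and the interpolation property (\ref{interpolation}) identifies this derivative with a combination of $p$-adic log multiple gamma values $L\Gamma_p$ and their classical shadow under $[\ ]_p$. Using the rewritten formula (\ref{rewrite}) I expect to obtain
\begin{align*}
\int_{\mathbf O}\log_p x\,d\nu_\eta(\mathfrak b,\mathcal D_\mathfrak f,x)
=-Y_p((\tfrac{H/F}{\mathfrak b}))+N\eta\,Y_p((\tfrac{H/F}{\mathfrak b\eta^{-1}}))+(\text{domain/scaling terms}),
\end{align*}
where the two leading terms come from the two pieces $\nu(\mathfrak b,\cdot)-N\eta\,\nu(\mathfrak b\eta^{-1},\cdot)$ of $\nu_\eta$, and the classes $c\in C_{\fpi}$ appearing inside each $Y_p$ are matched to the ideals $z\mathfrak b$, $z\mathfrak b\eta^{-1}$ with $z\equiv1\ {\bmod}^*\ \mathfrak f$ (here the freedom in the choice of the auxiliary ideals $\mathfrak a_c$ is used).

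It remains to show that the ``domain/scaling terms'' are exactly $-\sum_\epsilon\nu_\eta(\ldots)\log_p\epsilon-\zeta_{\mathfrak f,\eta}(0,\ldots)\log_p\pi$, so that they cancel the two explicit terms of the first step. These arise from two mismatches. First, $Y_p$ is computed from a Shintani domain $D$ $\bmod E_+$ with auxiliary ideals $\mathfrak a_c$, whereas $\nu_\eta$ uses Dasgupta's domain $\mathcal D_\mathfrak f$ $\bmod\efp$; the finite-index comparison of these fundamental domains is organized precisely by the pieces $\epsilon\mathcal D_\mathfrak f\cap\pi^{-1}\mathcal D_\mathfrak f$ with $\epsilon\in\efp$. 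Second, the truncation $\mathbf O=\mathcal O_{F_\mathfrak p}-\pi\mathcal O_{F_\mathfrak p}$ keeps only representatives with $0\le\mathrm{ord}_\mathfrak p<e$ (recall $\pi$ generates $\mathfrak p^e$ with $e$ the order of $\mathfrak p$ in $C_\mathfrak f$), which is what promotes $C_\mathfrak f$-data into the $C_{\fpi}$-sum defining $Y_p$. Both effects replace a region $R$ by a totally positive multiple $\alpha R$ with $\alpha\in\{\epsilon,\pi\}$, and Proposition \ref{eZ}-(ii) evaluates the resulting change of $L\Gamma_{\mathrm{id},p}$ as $\zeta(0,R)\log_p\alpha$. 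Matching $\zeta(0,\cdot)$ with $\nu_\eta(\ldots)$ and with $\zeta_{\mathfrak f,\eta}(0,\ldots)$ yields the asserted cancellation; the independence of $Y_p$ from the choices of $D$ and $\mathfrak a_c$ (noted after Definition \ref{Yp}) then gives the clean identity.

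The hard part is the bridging Lemma \ref{key} (and its consequence Lemma \ref{intzeta}): passing from the multiplicative-integration, norm-zeta description of $u_\eta$ to the Barnes multiple-gamma description of $Y_p$. Two normalizations must be reconciled exactly: the full norm $N(\cdot)=\prod_{\iota}\iota(\cdot)$ occurring in $\zeta_N$ against the single embedding $\mathrm{id}$ seen by $\log_p x$, and the interpolation factor $p^{-\mathrm{ord}_p(z)m}\theta_p(z)^{-m}$ built into $\zeta_p$ against the $\langle x\rangle$-normalization implicit in integrating over $\mathbf O$. Carrying these factors correctly through the Riemann-sum definition of the integral, and verifying that the classical shadow term emerges with exactly the coefficient dictated by $[\ ]_p$ (so that no stray $\pi$-periods survive), is where essentially all the work sits; once the bridge is established the rest is bookkeeping with Proposition \ref{eZ} and the well-definedness of $Y_p$.
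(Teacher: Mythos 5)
Your proposal is correct and follows essentially the same route as the paper: take $\log_p$ of Definition \ref{ueta}-(iv), evaluate the multiplicative integral by first re-expressing Dasgupta's norm-zeta measure through Barnes zeta values (the paper's Lemma \ref{key}) and then differentiating the interpolated integral (Lemma \ref{intzeta}), and finally cancel the $\epsilon_\eta$ and $\pi$ correction terms against the classical $[\ ]_p$-shadow via Proposition \ref{eZ} and the domain-independence of $Y_p$. This is precisely the paper's decomposition into the three equalities (\ref{Cl1})--(\ref{Cl3}), differing only in the order of the bookkeeping.
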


\begin{crl} \label{crlofmain}
{\rm Conjecture \ref{KYc}} implies
\begin{align*}
u_\eta(\mathfrak b,\mathcal D_\mathfrak f)^{h_H}
\equiv \prod_{\sigma\in\mathrm{Gal}(H/F)}\alpha_H^{\zeta_{\mathfrak f,\eta}(0,\sigma^{-1}) (\frac{H/F}{\mathfrak b}) \sigma}\bmod \ker\log_p,
\end{align*}
where $\mathfrak p_H$, $h_H$, $\alpha_H$ are the prime ideal of $H$ corresponding to the $p$-adic topology on $H$,
the class number of $H$, a generator of $\mathfrak p_H^{h_H}$.
\end{crl}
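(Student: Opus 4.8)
The plan is to combine the unconditional identity of Theorem \ref{main} with the conjectural evaluation in Conjecture \ref{KYc} (taken with $\iota=\mathrm{id}$ and the lift $\tilde{\mathrm{id}}\colon H\ra\mathbb C_p$ extending the fixed embedding, so that $\alpha_{H,\tilde{\mathrm{id}}}=\alpha_H$), and then to rearrange the resulting double sum according to the relevant element of $\mathrm{Gal}(H/F)$. Throughout I write $u_\eta:=u_\eta(\mathfrak b,\mathcal D_\mathfrak f)$, $\tau_\mathfrak b:=(\tfrac{H/F}{\mathfrak b})$, $\tau_\eta:=(\tfrac{H/F}{\eta})$, so that $(\tfrac{H/F}{\mathfrak b\eta^{-1}})=\tau_\mathfrak b\tau_\eta^{-1}$, and I abbreviate $\sigma_c:=\mathrm{Art}(c)|_H\in\mathrm{Gal}(H/F)$ for $c\in C_\mathfrak f$, noting $\sigma_{c^{-1}}=\sigma_c^{-1}$. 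Since the asserted congruence is modulo $\ker\log_p$, it suffices to prove equality of $\log_p$-values, i.e. that $h_H\log_p u_\eta$ equals $\sum_{\sigma}\zeta_{\mathfrak f,\eta}(0,\sigma^{-1})\log_p\alpha_H^{\tau_\mathfrak b\sigma}$, which is exactly $\log_p$ of the right-hand product.

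First I would take Theorem \ref{main}, multiply through by $h_H$, and substitute the formula of Conjecture \ref{KYc} for each of $Y_p(\tau_\mathfrak b)$ and $Y_p(\tau_\mathfrak b\tau_\eta^{-1})$; the factor $1/h_H$ in the conjecture cancels the $h_H$, leaving
\[
h_H\log_p u_\eta=\sum_{c\in C_\mathfrak f}\zeta(0,c^{-1})\log_p\alpha_H^{\tau_\mathfrak b\sigma_c}-N\eta\sum_{c\in C_\mathfrak f}\zeta(0,c^{-1})\log_p\alpha_H^{\tau_\mathfrak b\tau_\eta^{-1}\sigma_c}.
\]

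The core step is the reindexing, which I would carry out by grouping each sum according to the element of $\mathrm{Gal}(H/F)$ appearing in the exponent. In the first sum, collecting the terms with $\sigma_c=\sigma$ and substituting $d=c^{-1}$ (so that $\sigma_d=\sigma^{-1}$) identifies the coefficient of $\log_p\alpha_H^{\tau_\mathfrak b\sigma}$ with $\sum_{\sigma_d=\sigma^{-1}}\zeta(0,d)=\zeta_\mathfrak f(0,\sigma^{-1})$, where $\zeta_\mathfrak f(0,\cdot)$ on $\mathrm{Gal}(H/F)$ denotes the sum of the $C_\mathfrak f$-partial zeta values over the corresponding fiber. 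In the second sum, the substitution $\sigma=\tau_\eta^{-1}\sigma_c$ followed by $d=c^{-1}$ produces the coefficient $N\eta\,\zeta_\mathfrak f(0,\sigma^{-1}\tau_\eta^{-1})$. Hence
\[
h_H\log_p u_\eta=\sum_{\sigma\in\mathrm{Gal}(H/F)}\bigl(\zeta_\mathfrak f(0,\sigma^{-1})-N\eta\,\zeta_\mathfrak f(0,\sigma^{-1}\tau_\eta^{-1})\bigr)\log_p\alpha_H^{\tau_\mathfrak b\sigma}.
\]

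Finally I would identify the bracketed coefficient with $\zeta_{\mathfrak f,\eta}(0,\sigma^{-1})$: evaluating Definition \ref{ueta}-(iii) at $s=0$ gives $\zeta_{\mathfrak f,\eta}(0,\tau)=\zeta_\mathfrak f(0,\tau)-N\eta\,\zeta_\mathfrak f(0,\tau\tau_\eta^{-1})$, since the factor $N\eta^{1-s}$ becomes $N\eta$ at $s=0$; applying this with $\tau=\sigma^{-1}$ matches the coefficient exactly. The right-hand side is then literally $\log_p\bigl(\prod_{\sigma}\alpha_H^{\zeta_{\mathfrak f,\eta}(0,\sigma^{-1})\tau_\mathfrak b\sigma}\bigr)$, which gives the claimed congruence modulo $\ker\log_p$. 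The main obstacle I anticipate is purely bookkeeping: keeping the two $C_\mathfrak f\ra\mathrm{Gal}(H/F)$ fibrations and the inversion $c\mapsto c^{-1}$ consistent so that the $\eta$-smoothing pattern of $\zeta_{\mathfrak f,\eta}$ emerges with the correct argument, together with verifying that the exponents $\zeta_{\mathfrak f,\eta}(0,\sigma^{-1})$ are the integers (from the $\eta$-smoothing, cf.\ the $\mathbb Z$-valuedness in Definition \ref{ueta}-(ii)) needed for $\alpha_H^{\zeta_{\mathfrak f,\eta}(0,\sigma^{-1})\tau_\mathfrak b\sigma}$ to be well defined. There is no genuine analytic difficulty, since the hard input (Theorem \ref{main}) and the evaluation (Conjecture \ref{KYc}) are both assumed.
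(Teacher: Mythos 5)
Your proposal is correct and takes the route the paper intends: Corollary \ref{crlofmain} is stated without a separate proof, as an immediate consequence of substituting Conjecture \ref{KYc} (with $\iota=\mathrm{id}$, $\alpha_{H,\tilde\iota}=\alpha_H$) into Theorem \ref{main}, and your cancellation of $h_H$, the fiber-wise reindexing $c\mapsto c^{-1}$ over $C_\mathfrak f\rightarrow\mathrm{Gal}(H/F)$, and the identification of the coefficient $\zeta_\mathfrak f(0,\sigma^{-1})-N\eta\,\zeta_\mathfrak f(0,\sigma^{-1}\tau_\eta^{-1})=\zeta_{\mathfrak f,\eta}(0,\sigma^{-1})$ carry this out exactly as intended. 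The side remarks (reduction modulo $\ker\log_p$ to equality of $\log_p$-values, and integrality of the $\eta$-smoothed exponents) are also handled appropriately.
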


We prepare some Lemmas in order prove this Theorem.

\begin{lmm}[{\cite[Th\'eor\`eme 13]{CN2}}] \label{CN}
Let $\bm v \in F_+^r$, $z\in F_+$, $\bm \xi=(\xi_1,\dots,\xi_r)$ with $\xi_i$ roots of unity, $\neq 1$. 
For $k\in \nni$, we have
\begin{align*}
\zeta_N(-k,\bm v,z,\bm \xi)
&=\sum_{\bm m=(m_1,\dots,m_r)\in \mathbb N^r}\frac{\sum_{\bm l=(l_1,\dots,l_r),\ 1\leq l_i\leq m_i}
\begin{Bmatrix}\bm m\\\bm l\end{Bmatrix}N(z-\bm l\ttt \bm v)^k}{(\bm 1-\bm \xi)^{\bm m}}, \\
\zeta(-k,\bm v,z,\bm \xi)
&=\sum_{\bm m=(m_1,\dots,m_r)\in \mathbb N^r}\frac{\sum_{\bm l=(l_1,\dots,l_r),\ 1\leq l_i\leq m_i}
\begin{Bmatrix}\bm m\\\bm l\end{Bmatrix}(z-\bm l\ttt \bm v)^k}{(\bm 1-\bm \xi)^{\bm m}}.
\end{align*}
Here we put $(\bm 1-\bm \xi)^{\bm m}:=\prod_{i=1}^{r}(1-\xi_i)^{m_i}$, 
$\begin{Bmatrix}\bm m\\\bm l\end{Bmatrix}:=\prod_{i=1}^r\left((-1)^{l_i-1}\binom{m_i-1}{l_i-1}\right)$
with the binomial coefficient $\binom{m_i-1}{l_i-1}$.
The sum over $\bm m$ is actually a finite sum since we have
$\sum_{\bm l}\begin{Bmatrix}\bm m\\\bm l\end{Bmatrix}N(z-\bm l\ttt\bm v)^k
=\sum_{\bm l}\begin{Bmatrix}\bm m\\\bm l\end{Bmatrix}(z-\bm l\ttt\bm v)^k=0$
if $m_i$ is large enough.
\end{lmm}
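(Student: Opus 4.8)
The plan is to derive both identities from the Laplace/Mellin representation of Shintani's zeta functions, the decisive point being that the hypothesis $\xi_i\neq 1$ removes the pole at the origin of the relevant generating function. I would treat the $l=1$ function $\zeta(-k,\bm v,z,\bm\xi)$ first and then run the norm function $\zeta_N$ in parallel. So first I would introduce $g(t):=e^{-zt}\prod_{j=1}^r(1-\xi_je^{-v_jt})^{-1}$. Since $z$ and the $v_j$ are positive real numbers (via $\mathrm{id}$), interchanging summation with the exponential series gives $g(t)=\sum_{\bm m\in\nni^r}(\prod_j\xi_j^{m_j})e^{-(z+\bm m\ttt\bm v)t}$, so that $\zeta(s,\bm v,z,\bm\xi)=\Gamma(s)^{-1}\int_0^\infty t^{s-1}g(t)\,dt$ for $\Re(s)$ large. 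Because each $\xi_j\neq 1$, the denominators $1-\xi_je^{-v_jt}$ do not vanish at $t=0$; hence $g$ is holomorphic at the origin (this is exactly where the hypothesis enters) and bounded as $t\to\infty$.

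Next I would extract the special value. Splitting $\int_0^\infty=\int_0^1+\int_1^\infty$, the tail is entire in $s$ and is killed by $\Gamma(s)^{-1}$ at $s=-k$, while $\int_0^1 t^{s-1}g(t)\,dt=\sum_{n\geq0}\frac{g^{(n)}(0)}{n!\,(s+n)}$ has a simple pole at $s=-k$ with residue $g^{(k)}(0)/k!$. Combined with $\mathrm{Res}_{s=-k}\Gamma(s)=(-1)^k/k!$, i.e. $\Gamma(s)^{-1}=(-1)^k k!\,(s+k)+O((s+k)^2)$, this yields the standard formula $\zeta(-k,\bm v,z,\bm\xi)=(-1)^k g^{(k)}(0)$.

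It then remains to read off $g^{(k)}(0)$. The key algebraic manipulation is to rewrite each factor as $(1-\xi_je^{-v_jt})^{-1}=e^{v_jt}(e^{v_jt}-\xi_j)^{-1}$ and expand $(e^{v_jt}-\xi_j)^{-1}=\sum_{m_j\geq1}(1-e^{v_jt})^{m_j-1}(1-\xi_j)^{-m_j}$; the binomial theorem then gives
\begin{align*}
(1-\xi_je^{-v_jt})^{-1}=\sum_{m_j\geq1}\frac{1}{(1-\xi_j)^{m_j}}\sum_{l_j=1}^{m_j}(-1)^{l_j-1}\binom{m_j-1}{l_j-1}e^{l_jv_jt}.
\end{align*}
The shift to the exponentials $e^{+l_jv_jt}$ is precisely what produces the minus sign in $z-\bm l\ttt\bm v$. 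Multiplying the $r$ factors and recombining exponentials,
\begin{align*}
g(t)=\sum_{\bm m\in\mathbb N^r}\frac{1}{(\bm1-\bm\xi)^{\bm m}}\sum_{1\leq l_i\leq m_i}\begin{Bmatrix}\bm m\\\bm l\end{Bmatrix}e^{-(z-\bm l\ttt\bm v)t},
\end{align*}
and differentiating $k$ times at $t=0$ gives exactly the asserted formula for $\zeta(-k,\bm v,z,\bm\xi)$.

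For $\zeta_N$ I would repeat the argument with the $n$-fold representation $N(w)^{-s}=\Gamma(s)^{-n}\int_{(0,\infty)^n}\prod_{\iota}t_\iota^{s-1}e^{-\sum_\iota\iota(w)t_\iota}\,d\bm t$. Writing $s_j:=\sum_\iota\iota(v_j)t_\iota$, the same expansion of $(1-\xi_je^{-s_j})^{-1}$ shows that the generating function $G(\bm t)$ is again holomorphic at $\bm0$ and expands as $\sum_{\bm m,\bm l}\begin{Bmatrix}\bm m\\\bm l\end{Bmatrix}(\bm1-\bm\xi)^{-\bm m}e^{-\sum_\iota\iota(z-\bm l\ttt\bm v)t_\iota}$; each exponential term integrates to $N(z-\bm l\ttt\bm v)^{-s}$, specializing at $s=-k$ to the polynomial value $N(z-\bm l\ttt\bm v)^k$. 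I expect the main obstacle to be the rigorous extraction of the special value in this multidimensional setting: the factor $\Gamma(s)^{-n}$ has a zero of order $n$ at $s=-k$, so one must control the full pole structure of the $n$-fold Mellin integral and, more seriously, justify interchanging the infinite sum over $\bm m$ with the analytic continuation, since the individual integrals $N(z-\bm l\ttt\bm v)^{-s}$ only make sense when $z-\bm l\ttt\bm v$ is totally positive. This is resolved by proving the finiteness asserted at the end of the Lemma first: for fixed $\bm m$ the inner sum $\sum_{\bm l}\begin{Bmatrix}\bm m\\\bm l\end{Bmatrix}N(z-\bm l\ttt\bm v)^k$ is an iterated $(m_i-1)$-st finite difference, in each variable $l_i$, of a polynomial of degree $nk$ in $l_i$ (resp. degree $k$ in the $\zeta$ case), hence vanishes as soon as some $m_i>nk+1$. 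Thus the $\bm m$-sum is genuinely finite, both sides collapse to the same finite combination, and all the interchanges and specializations become legitimate.
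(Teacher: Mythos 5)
A preliminary remark on the comparison itself: the paper does not prove this Lemma at all --- it is quoted as a black box from Cassou-Nogu\`es \cite[Th\'eor\`eme 13]{CN2} and then consumed in Lemmas \ref{key} and \ref{intzeta}. So your attempt cannot be measured against an internal proof, only against the classical argument, and your route is exactly that classical one: Mellin representation of the generating function $g(t)=e^{-zt}\prod_{j}(1-\xi_je^{-v_jt})^{-1}$, holomorphy at $t=0$ forced by $\xi_j\neq 1$, the rewriting $(1-\xi e^{-vt})^{-1}=\sum_{m\geq 1}(1-\xi)^{-m}\sum_{l=1}^{m}(-1)^{l-1}\binom{m-1}{l-1}e^{lvt}$ (which I checked; it is where the shift $z-\bm l\ttt\bm v$ comes from, as you say), and the finite-difference vanishing for the final finiteness claim (degree $k$, resp.\ $nk$, in each $l_i$, so vanishing once $m_i>k+1$, resp.\ $m_i>nk+1$ --- correct). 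Your one-linear-form case is complete: $\zeta(-k,\bm v,z,\bm\xi)=(-1)^kg^{(k)}(0)$ follows as you argue, with one cosmetic nicety: the Taylor series of $g$ need not converge on all of $[0,1]$ (the complex poles $\xi_je^{-v_jt}=1$ can lie within distance $1$ of the origin), so split the integral at a small $\delta$ or use a finite Taylor expansion with remainder.

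In the norm case, however, your proposed resolution does not close the gap you yourself identified. Proving the finiteness assertion first tells you the right-hand side is a well-defined finite quantity, but it does not legitimize equating it with $\zeta_N(-k,\bm v,z,\bm\xi)$: the left-hand side is defined by meromorphic continuation of an $n$-fold Mellin integral, and no amount of finiteness on the right substitutes for controlling that continuation. The actual missing ingredient is Shintani's continuation lemma (\cite[Proposition 1]{Sh}; see also \cite[Chap.\ I, \S 1]{Yo}): for $G$ holomorphic near $\bm 0$ and rapidly decreasing on $(0,\infty)^n$, the function $\Gamma(s)^{-n}\int_{(0,\infty)^n}\prod_\iota t_\iota^{s-1}G(\bm t)\,d\bm t$ continues meromorphically, is finite at $s=-k$, and its value there is an explicit linear functional of finitely many Taylor coefficients of $G$ at $\bm 0$ --- a functional whose value on an exponential $e^{-\sum_\iota w_\iota t_\iota}$ is the polynomial $\prod_\iota w_\iota^{k}$, \emph{whether or not} $(w_\iota)$ is totally positive. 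With this lemma your argument does close: the term of index $\bm m$ in the expansion of $G$ vanishes at $\bm 0$ to order $\sum_j(m_j-1)$ because of the factor $\prod_j(1-e^{s_j})^{m_j-1}$, so only finitely many $\bm m$ contribute to the finitely many relevant Taylor coefficients; local uniform convergence of the expansion near $\bm 0$ lets you push the functional through the series; and each term then contributes $N(z-\bm l\ttt\bm v)^k$ as a polynomial identity, which disposes of your worry about the individual integrals $N(z-\bm l\ttt\bm v)^{-s}$ making sense. In short: right strategy and a correct one-dimensional case, but in the norm case you named the obstacle and then waved at it --- Shintani's lemma is the step that resolves it, and the finite-difference vanishing is a separate (correct) observation about the shape of the answer, not a justification of the interchange.
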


\begin{lmm} \label{key}
Let $\nu_\eta(\mathfrak b,D,\mathcal U)$ be as in {\rm Definition \ref{ueta}-(i)}.
Assume that $\eta$ is good for $D$. Then we have
\begin{align*}
\nu_\eta(\mathfrak b,D,\mathcal U)
=\zeta(0,F^\times_\mathfrak f\cap\mathfrak b^{-1}\cap D\cap\mathcal U)
-N\eta\,\zeta(0,F^\times_\mathfrak f\cap\mathfrak b^{-1}\eta\cap D\cap\mathcal U).
\end{align*}
\end{lmm}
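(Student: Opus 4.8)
The plan is to expand Dasgupta's $\eta$-smoothing into additive characters of the residue field $\mathcal O_F/\eta$ and to observe that only the nontrivial characters survive the cancellation; these produce Shintani zeta functions with nontrivial root-of-unity parameters, and for such parameters the value at $s=0$ does not distinguish the full norm $N$ from the single embedding $\mathrm{id}$.

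Since $\zeta_N(0,\cdot)$ and $\zeta(0,\cdot)$ are both defined cone by cone (Definitions \ref{zetaR}, \ref{Sz}) and everything is additive in the region, I would first write $F^\times_\mathfrak f\cap\mathfrak b^{-1}\cap D\cap\mathcal U$ in the form of Definition \ref{zetaR}-(i) and reduce to a single generating piece $\{(\bm x+\bm m)\ttt\bm v\mid\bm m\in\nni^r\}$ with $\bm v\in(\mathcal O_{F,+}-\eta)^{r}$. Working with the Dirichlet series for $\Re(s)$ large and continuing to $s=0$, set $q:=N\eta$. Because $\eta$ is good, $q$ is a rational prime, so $\mathcal O_F/\eta\cong\mathbb F_q$, and since $(\mathfrak b,\eta)=1$ the additive group $\mathfrak b^{-1}/\mathfrak b^{-1}\eta$ is cyclic of prime order $q$. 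For $z\in\mathfrak b^{-1}$ the orthogonality of its $q$ characters $\psi$ gives $\mathbf 1[z\in\mathfrak b^{-1}\eta]=q^{-1}\sum_\psi\psi(\bar z)$.

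Multiplying by $N(z)^{-s}$ and summing over the piece, the trivial character reproduces the unsmoothed series, so the defining difference of $\nu_\eta$ collapses to
\begin{align*}
\zeta_N(s,F^\times_\mathfrak f\cap\mathfrak b^{-1}\cap D\cap\mathcal U)-N\eta\,\zeta_N(s,F^\times_\mathfrak f\cap\mathfrak b^{-1}\eta\cap D\cap\mathcal U)=-\sum_{\psi\neq 1}\ \sum_{\bm m\in\nni^r}\psi(\bar z)\,N(z)^{-s}.
\end{align*}
All points of the piece lie in $\mathfrak b^{-1}$, so $v_i\in\mathfrak b^{-1}$ and $z=\bm x\ttt\bm v+\sum_i m_iv_i$; additivity of $\psi$ yields $\psi(\bar z)=\psi(\overline{\bm x\ttt\bm v})\prod_i\xi_i^{m_i}$ with $\xi_i:=\psi(\overline{v_i})$, so each nontrivial summand is a constant multiple of the Shintani zeta function $\zeta_N(s,\bm v,\bm x\ttt\bm v,\bm\xi)$ of Lemma \ref{CN}. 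The goodness hypothesis $v_i\notin\eta$ together with $(\mathfrak b,\eta)=1$ gives $\overline{v_i}\neq 0$, and a nontrivial character of a group of prime order has trivial kernel; hence $\xi_i\neq 1$ and Lemma \ref{CN} is applicable to every $\bm\xi$ occurring.

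Finally I would invoke Lemma \ref{CN} at $k=0$: since $N(z-\bm l\ttt\bm v)^{0}=(z-\bm l\ttt\bm v)^{0}=1$, the two displayed formulas there coincide, so $\zeta_N(0,\bm v,\bm x\ttt\bm v,\bm\xi)=\zeta(0,\bm v,\bm x\ttt\bm v,\bm\xi)$ for every nontrivial $\bm\xi$. Repeating the character decomposition verbatim with $\mathrm{id}(z)^{-s}$ in place of $N(z)^{-s}$ shows that $\zeta(0,F^\times_\mathfrak f\cap\mathfrak b^{-1}\cap D\cap\mathcal U)-N\eta\,\zeta(0,F^\times_\mathfrak f\cap\mathfrak b^{-1}\eta\cap D\cap\mathcal U)$ expands into the same nontrivial-character sum; as the two sums agree term by term at $s=0$, summing back over the pieces gives the asserted identity. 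The step I expect to be most delicate is the bookkeeping ensuring that the twists $\bm\xi$ and the constants $\psi(\overline{\bm x\ttt\bm v})$ are literally the same in the $\zeta_N$- and $\zeta$-expansions — they depend only on $\psi$ and on $\bm v,\bm x$ modulo $\eta$ — so that Lemma \ref{CN} matches the two series termwise; by contrast the analytic continuation is harmless, because at $s=0$ each nontrivial-$\bm\xi$ value is the finite sum furnished by Lemma \ref{CN}.
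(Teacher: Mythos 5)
Your proposal is correct and follows essentially the same route as the paper's own proof: the paper likewise reduces to a single cone with generators prime to $\eta$, detects the condition $\mathrm{ord}_\eta z>0$ by orthogonality of the $N\eta$ additive characters modulo $\eta$ (phrased there via a primitive $N\eta$-th root of unity and integer representatives, using that the residue degree is $1$ so that $\mathbb Z\twoheadrightarrow\mathcal O_F/\eta$), and concludes from Lemma \ref{CN} at $k=0$ that the normed and un-normed twisted Shintani zeta values at $s=0$ agree termwise. The only cosmetic difference is that the paper makes your ``delicate bookkeeping'' step explicit by scaling the cone generators by a positive integer $L\in\mathfrak f\mathfrak p^m\mathfrak b^{-1}$ with $(L,\eta)=1$, which is exactly what guarantees that the same twists $\bm\xi$ appear in both expansions.
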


\begin{proof}
It is enough to show the statement when 
\begin{itemize}
\item $D$ is a cone $C(\bm v)$ with $\bm v=(v_1,\dots,v_r)$, $v_i \in \mathcal O_F-\eta$.
\item $\mathcal U$ is of the form $a+\mathfrak p^m \mathcal O_{F_\mathfrak p}$ ($m \in \mathbb N$, $a \in \mathcal O_{F_\mathfrak p}$).
\end{itemize}
Put $R:=F^\times_\mathfrak f \cap\mathfrak b^{-1}\cap C(\bm v) \cap(a+\mathfrak p^m \mathcal O_{F_\mathfrak p})$. 
By definition we have
\begin{align}
\nu_\eta(\mathfrak b,C(\bm v),a+\mathfrak p^m \mathcal O_{F_\mathfrak p})
&=\zeta_N(0,R)-N\eta\,\zeta_N(0,\{z\in R \mid \mathrm{ord}_\eta z>0\}) \notag \\
&=[\sum_{z\in R}Nz^{-s}-N\eta \sum_{z\in R,\ \mathrm{ord}_\eta z>0}Nz^{-s}]_{s=0}. \label{eq1}
\end{align}
Let $L$ be a positive integer satisfying $L\in \mathfrak f\mathfrak p^m\mathfrak b^{-1}$, $(\eta,L)=1$.
Then we have 
\begin{align*}
\sum_{z\in R}Nz^{-s}&=\sum_{\bm x\in R_a}\sum_{\bm m \in \nni^r}N( (\bm x+\bm m)\ttt (L\bm v))^{-s}, \\
R_a&:=\{\bm x \in (\mathbb Q\cap (0,1])^r \mid 
\bm x\ttt (L\bm v) \in F^\times_\mathfrak f \cap\mathfrak b^{-1} \cap(a+\mathfrak p^m \mathcal O_{F_\mathfrak p})\}.
\end{align*}
Since $N\eta$ is a rational prime, the following homomorphism is a surjection.
\begin{align*}
\mathbb Z\rightarrow \mathbb Z/N\eta \cong \mathcal O_F/\eta 
\cong {\mathcal O_F}_{(\eta)}/\eta{\mathcal O_F}_{(\eta)}.
\end{align*}
Here we denote the localization of $\mathcal O_F$ at $\eta$ by ${\mathcal O_F}_{(\eta)}$.
Hence for each $\bm x\in R_a$, there exists an integer $n_{\bm x}$ satisfying 
$\bm x\ttt\bm v \equiv n_{\bm x} \bmod \eta{\mathcal O_F}_{(\eta)}$.
Similarly we take $n_i$ satisfying $Lv_i \equiv n_i \bmod \eta{\mathcal O_F}_{(\eta)}$ and put $\bm n_{L\bm v}:=(n_1,\dots,n_r)$.
Then the following are equivalent:
\begin{align*}
\mathrm{ord}_\eta ((\bm x+\bm m)\ttt (L\bm v))>0 \Leftrightarrow n_{\bm x}+\bm m\ttt \bm n_{L\bm v} \equiv 0 \bmod N\eta.
\end{align*}
Let $\zeta$ be a primitive $N\eta$th root of unity. We put $\xi_{\bm x}:=\zeta^{n_{\bm x}}$, $\xi_i:=\zeta^{n_i}$, $\bm \xi_{L\bm v}:=(\xi_1,\dots,\xi_r)$. 
Note that $\xi_i\neq 1$ for any $i$.
Then we have
\begin{align*}
\sum_{\lambda=1}^{N\eta-1}(\xi_{\bm x}\bm \xi_{L\bm v}^{\bm m})^\lambda=
\begin{cases}
-1 & (\mathrm{ord}_\eta ((\bm x+\bm m)\ttt (L\bm v))=0), \\
N\eta-1 & (\mathrm{ord}_\eta ((\bm x+\bm m)\ttt (L\bm v))>0).
\end{cases}
\end{align*}
Here we put $\bm \xi_{L\bm v}^{\bm m}:=\prod_{i=1}^r \xi_i^{m_i}$.
It follows that
\begin{align}
\sum_{\bm x \in R_a}\sum_{\lambda=1}^{N\eta-1}\xi_{\bm x}^\lambda \zeta_N(s,L\bm v,\bm x,\bm \xi_{L\bm v}^\lambda )&=
\sum_{\bm x \in R_a}\sum_{\bm m \in \nni^r}\sum_{\lambda=1}^{N\eta-1}
(\xi_{\bm x}\bm \xi_{L\bm v}^{\bm m})^\lambda N((\bm x+\bm m)\ttt (L\bm v))^{-s} \notag \\
&=-\sum_{z\in R}Nz^{-s}+N\eta \sum_{z\in Z,\ \mathrm{ord}_\eta z>0}Nz ^{-s}. \label{eq2}
\end{align}
Similarly we obtain
\begin{align}
\sum_{\bm x \in R_a}\sum_{\lambda=1}^{N\eta-1}\xi_{\bm x}^\lambda \zeta(s,L\bm v,\bm x,\bm \xi_{L\bm v}^\lambda )
&=-\sum_{z\in R}z^{-s}+N\eta \sum_{z\in Z,\ \mathrm{ord}_\eta z>0}z ^{-s} \notag \\
&=\zeta(s,R)-N\eta\,\zeta(s,\{z\in R \mid \mathrm{ord}_\eta z>0\}). \label{eq3}
\end{align}
By Lemma \ref{CN}, we have for $\bm x \in R_a$
\begin{align}
\zeta_N(0,L\bm v,\bm x,\bm \xi_{L\bm v}^\lambda )=\zeta(0,L\bm v,\bm x,\bm \xi_{L\bm v}^\lambda ). \label{eq4}
\end{align}
Then the assertion follows from (\ref{eq1}), (\ref{eq2}), (\ref{eq3}), (\ref{eq4}).
\end{proof}

Dasgupta's $p$-adic integration $\int d\nu_\eta(\mathfrak b,\mathcal D_\mathfrak f,x)$ is originally associated with 
special values of multiple zeta functions ``with the norm'' $\zeta_N(\cdots)$. 
By the above Lemma, we can rewrite it in terms of special values of multiple zeta functions ``without the norm'' $\zeta(\cdots)$. 
This observation is one of the main discoveries in this paper.

\begin{lmm} \label{intzeta}
Let $\nu_\eta(\mathfrak b,D,\mathcal U)$, $\mathbf O=\mathcal O_{F_\mathfrak p}-\pi\mathcal O_{F_\mathfrak p}$
be as in {\rm Definition \ref{ueta}}.
Assume that $\eta$ is good for $D$.
Then we have for $k\in \nni$, $s \in \mathbb Z_p$
\begin{align}
\int_{\mathbf O}x^k d\nu_\eta(\mathfrak b, D,x)
&=\zeta(-k,F^\times_\mathfrak f \cap\mathfrak b^{-1}\cap D \cap \mathbf O)
-N\eta\,\zeta(-k,F^\times_\mathfrak f \cap\mathfrak b^{-1}\eta\cap D \cap\mathbf O), \notag \\
\int_{\mathbf O}\langle x\rangle^{-s} d\nu_\eta(\mathfrak b, D,x) 
&=\zeta_p(s,F^\times_\mathfrak f \cap\mathfrak b^{-1}\cap D \cap \mathbf O)
-N\eta\,\zeta_p(s,F^\times_\mathfrak f \cap\mathfrak b^{-1}\eta\cap D \cap\mathbf O). \label{pmzitopm}
\end{align}
Here we put $\langle x\rangle:=p^{-\mathrm{ord}_p x}\theta_p(x)^{-1}x$ by using $\mathrm{ord}_p,\theta_p$ in {\rm (\ref{ordtheta})}.
\end{lmm}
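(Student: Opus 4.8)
The plan is to trace Dasgupta's measure back to the root-of-unity-twisted single-variable Shintani zeta functions that already surfaced in the proof of Lemma \ref{key}, and then to prove a $\mathfrak p$-adic congruence relating their values at $s=0$ and at $s=-k$ by means of the Cassou-Nogu\`es formula of Lemma \ref{CN}. Since $\nu_\eta(\mathfrak b,D,\cdot)$, the two integrals, and both right-hand sides are additive with respect to a decomposition of $D$ into cones good for $\eta$, I may assume $D=C(\bm v)$ is a single such cone. Put $R_1:=F^\times_\mathfrak f\cap\mathfrak b^{-1}\cap C(\bm v)$ and $R_2:=F^\times_\mathfrak f\cap\mathfrak b^{-1}\eta\cap C(\bm v)$, so that Lemma \ref{key} reads $\nu_\eta(\mathfrak b,C(\bm v),\mathcal U)=\zeta(0,R_1\cap\mathcal U)-N\eta\,\zeta(0,R_2\cap\mathcal U)$. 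For $m$ large, $\mathbf O$ is a finite disjoint union of cosets $\mathcal U=a+\mathfrak p^m\mathcal O_{F_\mathfrak p}$, and—using that $\nu_\eta$ is $\mathbb Z$-valued (Definition \ref{ueta}-(ii)), so the relevant limit exists—the additive integral is the Riemann sum $\int_{\mathbf O}x^k\,d\nu_\eta=\lim_m\sum_a\nu_\eta(\mathfrak b,C(\bm v),\mathcal U)\,a^k$.

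First I would establish the archimedean moment identity. The proof of Lemma \ref{key} attaches to each coset $\mathcal U$ an auxiliary rational integer $L\in\mathfrak f\mathfrak p^m\mathfrak b^{-1}$ with $(\eta,L)=1$, a finite set of offsets $\bm x$, and nontrivial $N\eta$-th roots of unity $\bm\xi_{L\bm v}$, for which the identity established there holds as an equality of meromorphic functions of $s$:
\[
\zeta(s,R_1\cap\mathcal U)-N\eta\,\zeta(s,R_2\cap\mathcal U)=-\sum_{\bm x}\sum_{\lambda=1}^{N\eta-1}\xi_{\bm x}^\lambda\,\zeta(s,L\bm v,\bm x,\bm\xi_{L\bm v}^\lambda).
\]
Evaluating at $s=0$ recovers $\nu_\eta(\mathfrak b,C(\bm v),\mathcal U)$, and at $s=-k$ it produces $\zeta(-k,R_1\cap\mathcal U)-N\eta\,\zeta(-k,R_2\cap\mathcal U)$; thus it suffices to compare the twisted values at $s=0$ and $s=-k$. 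Working with the twist is essential: the individual quantities $\zeta(0,R_i\cap\mathcal U)$ are unbounded, whereas each $\zeta(s,L\bm v,\bm x,\bm\xi_{L\bm v}^\lambda)$ is $\mathfrak p$-integral, because in Lemma \ref{CN} the only denominators are the factors $(\bm 1-\bm\xi^\lambda)^{\bm m}$ and $N\eta$ is prime to $p$, so every $1-\xi_i^\lambda$ is a unit of $\mathcal O_{F_\mathfrak p}$.

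The crux—and the step I expect to be the main obstacle—is the congruence
\[
\zeta(-k,L\bm v,\bm x,\bm\xi_{L\bm v}^\lambda)\equiv(\bm x\ttt(L\bm v))^k\,\zeta(0,L\bm v,\bm x,\bm\xi_{L\bm v}^\lambda)\pmod{\mathfrak p^m}.
\]
Here I would feed $\mathfrak p^m\mid Lv_j$ (which forces $\bm l\ttt(L\bm v)\equiv0\bmod\mathfrak p^m$, hence $(\bm x\ttt(L\bm v)-\bm l\ttt(L\bm v))^k\equiv(\bm x\ttt(L\bm v))^k$ termwise) into the finite sum of Lemma \ref{CN}, the unit denominators guaranteeing that the congruence survives, and the resulting constant being precisely $\zeta(0,L\bm v,\bm x,\bm\xi_{L\bm v}^\lambda)$ by Lemma \ref{CN} with $k=0$. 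It is exactly at this point that the root-of-unity twist and the auxiliary integer $L$ cooperate to clear denominators and align the congruence classes, replacing the false per-set statement $\zeta(-k,R_i\cap\mathcal U)\equiv a^k\,\zeta(0,R_i\cap\mathcal U)$.

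Combining the displayed identity at $s=-k$ with this congruence, and using $(\bm x\ttt(L\bm v))^k\equiv a^k\bmod\mathfrak p^m$ since $\bm x\ttt(L\bm v)\in\mathcal U$, I obtain $\zeta(-k,R_1\cap\mathcal U)-N\eta\,\zeta(-k,R_2\cap\mathcal U)\equiv a^k\,\nu_\eta(\mathfrak b,C(\bm v),\mathcal U)\bmod\mathfrak p^m$ for each coset. Summing over the cosets making up $\mathbf O$, the left-hand side adds up, by additivity of $\zeta(-k,\cdot)$, to $\zeta(-k,R_1\cap\mathbf O)-N\eta\,\zeta(-k,R_2\cap\mathbf O)$, while the right-hand side is the Riemann sum; the ultrametric inequality bounds their difference by $\mathfrak p^m$, and letting $m\to\infty$ gives the first identity. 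For the second identity I would note that both sides are continuous in $s\in\mathbb Z_p$, so it is enough to match them on the dense set of non-positive integers $s=-k$. There $\langle x\rangle/x=p^{-\mathrm{ord}_p x}\theta_p(x)^{-1}$ is locally constant on $\mathbf O$; refining the cosets until this factor is constant (which also makes the basis vectors $\mathfrak p$-divisible, so that (\ref{cond}) holds), applying the first identity piece by piece, and invoking the interpolation (\ref{interpolation}) converts $\int_{\mathbf O}\langle x\rangle^k\,d\nu_\eta$ into $\zeta_p(-k,R_1\cap\mathbf O)-N\eta\,\zeta_p(-k,R_2\cap\mathbf O)$, as desired.
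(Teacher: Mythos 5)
Your proposal is correct and follows essentially the same route as the paper's proof: reduce to a single cone and cosets, invoke the twisted Shintani zeta identity and auxiliary data $L,\bm x,\bm\xi_{L\bm v}$ from the proof of Lemma \ref{key}, use the Cassou-Nogu\`es formula of Lemma \ref{CN} together with $L\in\mathfrak p^m$ to get the congruence $a^k\nu_\eta(\mathfrak b,C(\bm v),\mathcal U)\equiv\zeta(-k,R_1\cap\mathcal U)-N\eta\,\zeta(-k,R_2\cap\mathcal U)\bmod\mathfrak p^m$, and deduce the $p$-adic statement from the interpolation property (\ref{interpolation}). Your treatment is in fact slightly more explicit than the paper's at two points it leaves implicit: that the denominators $(\bm 1-\bm\xi_{L\bm v}^\lambda)^{\bm m}$ are $p$-adic units (so the termwise congruence survives), and the continuity/density and local-constancy-of-$\langle x\rangle/x$ argument behind the second identity.
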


\begin{proof}
It is enough to show the statement when $D=C(\bm v)$ with $\bm v=(v_1,\dots,v_r)$, $v_i \in \mathcal O_F-\eta$.
By definition we can write
\begin{align*}
\int_{\mathbf O}x^k d\nu_\eta(\mathfrak b, C(\bm v),x)
=\lim_{\leftarrow }\sum_{\overline a\in \mathbf O/(1+\mathfrak p^m\mathcal O_{F_\mathfrak p})}
a^k\nu_\eta(\mathfrak b,C(\bm v),a(1+\mathfrak p^m\mathcal O_{F_\mathfrak p})).
\end{align*}
By Lemmas \ref{CN}, \ref{key}, we have
\begin{align*}
a^k\nu_\eta(\mathfrak b,C(\bm v),a(1+\mathfrak p^m\mathcal O_{F_\mathfrak p}))
=-\sum_{\bm m\in \mathbb N^r}\sum_{\lambda=1}^{N\eta-1}\sum_{\bm x \in R_a}
\frac{\xi_{\bm x}^\lambda \sum_{1\leq l_i \leq m_i}\begin{Bmatrix}\bm m\\\bm l\end{Bmatrix}a^k}{(\bm 1-\bm \xi_{L\bm v}^\lambda)^{\bm m}},
\end{align*}
where $L,R_a,\xi_{\bm x},\bm \xi_{L\bm v}$ are as in the proof of Lemma \ref{key}.
On the other hand, by Lemma \ref{CN} again, we obtain
\begin{align*}
&\zeta(-k,F^\times_\mathfrak f \cap\mathfrak b^{-1}\cap C(\bm v)\cap\mathbf O)
-N\eta\,\zeta(-k,F^\times_\mathfrak f \cap\mathfrak b^{-1}\eta\cap C(\bm v)\cap\mathbf O) \\
&=-\sum_{\overline a \in \mathbf O/(1+\mathfrak p^m\mathcal O_{F_\mathfrak p})}\sum_{\bm x \in R_a}
\sum_{\bm m \in \mathbb N^r}\sum_{\lambda=1}^{N\eta-1}\frac{\xi_{\bm x}^\lambda \sum_{1\leq l_i \leq m_i}
\begin{Bmatrix}\bm m\\\bm l\end{Bmatrix}((\bm x-\bm l)\ttt (L\bm v))^k}{(\bm 1-\bm \xi_{L\bm v}^\lambda )^{\bm m}}.
\end{align*}
By definition, we see that $L\in \mathfrak p^m$, $\bm x\ttt (L\bm v) \equiv a \bmod \mathfrak p^m\mathcal O_{F_\mathfrak p}$ for $\bm x \in R_a$.
It follows that 
\begin{align*}
a ^k \equiv ((\bm x-\bm l)\ttt (L\bm v))^k \bmod \mathfrak p^m \mathcal O_{F_\mathfrak p} \quad (\bm x \in R_a),
\end{align*}
Hence the first assertion is clear.
The second assertion follows from the $p$-adic interpolation property (\ref{interpolation}).
\end{proof}

\begin{proof}[Proof of {\rm Theorem \ref{main}}]
For a fractional ideal $\mathfrak b$, a Shintani set $D$, an open compact subset $\mathcal U \subset \mathcal O_{F_\mathfrak p}$, and for $*=\emptyset,p$,
we put 
\begin{align*}
L\Gamma_*(\mathfrak b,D,\mathcal U)&:=L\Gamma_*(F^\times_\mathfrak f \cap\mathfrak b^{-1}\cap D\cap\mathcal U), \\
L\Gamma_{\eta,*} (\mathfrak b,D,\mathcal U)
&:=L\Gamma_*(\mathfrak b,D,\mathcal U)-N\eta\,L\Gamma_*(\mathfrak b\eta^{-1},D,\mathcal U)
\end{align*}
whenever each function is well-defined.
It suffices to show the following three equalities:
\begin{align}
\log_p(\epsilon_\eta (\mathfrak b,\mathcal D_\mathfrak f ,\pi)
\pi^{\zeta_{\mathfrak f,\eta}(0,(\frac{H_\mathfrak f/F}{\mathfrak b}))})
&=[L\Gamma_\eta (\mathfrak b,\mathcal D_\mathfrak f ,\mathbf O)]_p, \label{Cl1} \\
\log_p(\times\hspace{-1.1em}\int_{\mathbf O}x\ d\nu_\eta(\mathfrak b,\mathcal D_\mathfrak f ,x))
&=-L\Gamma_{\eta,p}(\mathfrak b,\mathcal D_\mathfrak f ,\mathbf O), \label{Cl2} \\
L\Gamma_p(\mathfrak b,\mathcal D_\mathfrak f ,\mathbf O)-[L\Gamma(\mathfrak b,\mathcal D_\mathfrak f ,\mathbf O)]_p
&=Y_p((\tfrac{H/F}{\mathfrak b})). \label{Cl3}
\end{align}
Let $\bm v_j$, $\epsilon_j$ ($j \in J'$) be as in Definition \ref{assump}-(iv).
Since $\mathcal D_\mathfrak f,\pi^{-1}\mathcal D_\mathfrak f$ are fundamental domains of $F\otimes \mathbb R_+/E_{\mathfrak f,+}$, 
we see that 
\begin{align*}
\epsilon\mathcal D_\mathfrak f\cap\pi^{-1}\mathcal D_\mathfrak f =\coprod_{j \in J',\ \epsilon_j =\epsilon} \epsilon_j C(\bm v_j)
\quad (\epsilon \in E_{\mathfrak f,+})
\end{align*}
Namely we have
\begin{align*}
\epsilon_\eta (\mathfrak b,\mathcal D_\mathfrak f ,\pi)=
\prod_{j\in J'}\epsilon_j ^{\nu_\eta(\mathfrak b,\epsilon_jC(\bm v_j),\mathcal O_{F_\mathfrak p})}.
\end{align*}
 By Lemma \ref{key}, we can write
\begin{align*}
&\nu_\eta(\mathfrak b,\epsilon_jC(\bm v_j),\mathcal O_{F_\mathfrak p}) \\
&=\zeta(0,F^\times_\mathfrak f \cap\mathfrak b^{-1}\cap\epsilon_j C(\bm v_j)\cap\mathcal O_{F_\mathfrak p})-
N\eta\,\zeta(0,F^\times_\mathfrak f \cap\mathfrak b^{-1}\eta\cap\epsilon_j C(\bm v_j)\cap
\mathcal O_{F_\mathfrak p}).
\end{align*}
Therefore by Proposition \ref{eZ}-(i) we obtain
\begin{align} \label{lg1}
\log_p(\epsilon_\eta (\mathfrak b,\mathcal D_\mathfrak f ,\pi))
&=[\sum_{j\in J'}L\Gamma_\eta (\mathfrak b,C(\bm v_j),\mathcal O_{F_\mathfrak p})
-\sum_{j\in J'}L\Gamma_\eta (\mathfrak b,\epsilon_jC(\bm v_j) ,\mathcal O_{F_\mathfrak p})]_p \notag \\
&=[L\Gamma_\eta (\mathfrak b,\mathcal D_\mathfrak f ,\mathcal O_{F_\mathfrak p})
-L\Gamma_\eta (\mathfrak b,\pi^{-1}\mathcal D_\mathfrak f ,\mathcal O_{F_\mathfrak p})]_p.
\end{align}
We easily see that 
\begin{align*}
\zeta_{\mathfrak f,\eta}(0,(\tfrac{H_\mathfrak f/F}{\mathfrak b}))
&=\nu_\eta(\mathfrak b,\mathcal D_\mathfrak f ,\mathcal O_{F_\mathfrak p}), \\
\pi(F^\times_\mathfrak f \cap\mathfrak b^{-1}\cap\pi^{-1}\mathcal D_\mathfrak f \cap\mathcal O_{F_\mathfrak p})
&=F^\times_\mathfrak f \cap\pi\mathfrak b^{-1}\cap\mathcal D_\mathfrak f \cap\pi\mathcal O_{F_\mathfrak p}.
\end{align*}
Hence, by Proposition \ref{eZ}-(i) again, we get
\begin{align} \label{lg2}
\log_p(\pi^{\zeta_{\mathfrak f,\eta}(0,(\frac{H_\mathfrak f/F}{\mathfrak b}))})
=[L\Gamma_\eta (\mathfrak b,\pi^{-1}\mathcal D_\mathfrak f ,\mathcal O_{F_\mathfrak p})
-L\Gamma_\eta (\pi^{-1}\mathfrak b,\mathcal D_\mathfrak f ,\pi\mathcal O_{F_\mathfrak p})]_p.
\end{align}
Since 
$(F^\times_\mathfrak f \cap\mathfrak b^{-1}\cap\mathcal D_\mathfrak f \cap\mathbf O)
\coprod (F^\times_\mathfrak f \cap\pi\mathfrak b^{-1}\cap\mathcal D_\mathfrak f \cap\pi\mathcal O_{F_\mathfrak p})
=F^\times_\mathfrak f \cap\mathfrak b^{-1}\cap\mathcal D_\mathfrak f \cap\mathcal O_{F_\mathfrak p}$,
we have
\begin{align} \label{lg3}
L\Gamma_\eta (\mathfrak b,\mathcal D_\mathfrak f ,\mathbf O)
=L\Gamma_\eta (\mathfrak b,\mathcal D_\mathfrak f ,\mathcal O_{F_\mathfrak p})
-L\Gamma_\eta (\pi^{-1}\mathfrak b,\mathcal D_\mathfrak f ,\pi\mathcal O_{F_\mathfrak p}).
\end{align}
Then the assertion (\ref{Cl1}) follows from (\ref{lg1}), (\ref{lg2}), (\ref{lg3}).

Next, differentiating (\ref{pmzitopm}) at $s=0$, we obtain
\begin{align*}
-\int_{\mathbf O}\log_p x\, d\nu_\eta(\mathfrak b,\mathcal D_\mathfrak f ,x)
=L\Gamma_{\eta,p}(\mathfrak b,\mathcal D_\mathfrak f ,\mathbf O).
\end{align*}
By definition, we have $\log_p(\times\hspace{-0.9em}\int_{\mathbf O}x\ d\nu_\eta(\mathfrak b,\mathcal D_\mathfrak f ,x))
=\int_{\mathbf O}\log_p x\, d\nu_\eta(\mathfrak b,\mathcal D_\mathfrak f ,x)$. Hence the assertion (\ref{Cl2}) is clear. 

Finally we prove (\ref{Cl3}). Let $D$ be a Shintani domain $\bmod E_+$.
For each $c \in C_{\mathfrak f \mathfrak p}$, we take an integral ideal $\mathfrak a_c$ satisfying $\mathfrak a_c\mathfrak f \in \pi(c)$, 
and put $R_c:=\{z \in D \mid \mathcal O_F \supset z \mathfrak a_c \mathfrak f\mathfrak p \in c\}$. By (\ref{rewrite})  we can write 
\begin{align*} 
Y_p((\tfrac{H/F}{\mathfrak b}))=\sum_{c \in C_{\mathfrak f\mathfrak p},\ \mathrm{Art}(\overline c)|_H=(\frac{H/F}{\mathfrak b})}L\Gamma_p(R_c)
-[\sum_{c \in C_{\mathfrak f\mathfrak p},\ \mathrm{Art}(\overline c)|_H=(\frac{H/F}{\mathfrak b})}L\Gamma(R_c)]_p.
\end{align*}
Since $H$ is the fixed subfield under $(\frac{H_\mathfrak f/F}{\mathfrak p})$, 
we may replace 
\begin{align*}
\sum_{c \in C_{\mathfrak f\mathfrak p},\ \mathrm{Art}(\overline c)|_H=(\frac{H/F}{\mathfrak b})}\cdots=
\sum_{k=0}^{e-1}\sum_{c \in C_{\mathfrak f\mathfrak p},\ \overline c=[\mathfrak b\mathfrak p^{-k}]}\cdots,
\end{align*}
where $\overline c$ denotes the image under $C_{\mathfrak f\mathfrak p}\ra C_\mathfrak f$, 
$[\mathfrak a]$ denotes the ideal class in $C_\mathfrak f$ of a fractional ideal $\mathfrak a$.
On the other hand, we can write for $*=\emptyset,p$
\begin{align*}
\begin{split}
L\Gamma_*(\mathfrak b,\mathcal D_\mathfrak f ,\mathbf O)=
\sum_{k=0}^{e-1}L\Gamma_*(\mathfrak b,\mathcal D_\mathfrak f ,\mathfrak p^k\mathcal O_{F_\mathfrak p}^\times).
\end{split}
\end{align*}
Therefore it suffices to show that we have for each $k$ 
\begin{align}
&(\sum_{c \in C_{\mathfrak f\mathfrak p},\ \overline c=[\mathfrak b\mathfrak p^{-k}]}L\Gamma_p(R_c))
-L\Gamma_p(\mathfrak b,\mathcal D_\mathfrak f ,\mathfrak p^k\mathcal O_{F_\mathfrak p}^\times) \notag \\
&=[(\sum_{c \in C_{\mathfrak f\mathfrak p},\ \overline c=[\mathfrak b\mathfrak p^{-k}]}L\Gamma(R_c))
-L\Gamma(\mathfrak b,\mathcal D_\mathfrak f ,\mathfrak p^k\mathcal O_{F_\mathfrak p}^\times)]_p. \label{Cl32}
\end{align}
We fix $k$. Whenever $\overline c=[\mathfrak b\mathfrak p^{-k}]$, $\pi(c) \in C_{(1)}$ is constant,
so we may put $\mathfrak a_c$ to be a fixed integral ideal $\mathfrak a_0$.
Then we have
\begin{align*}
\coprod_{c \in C_{\mathfrak f\mathfrak p},\ \overline c=[\mathfrak b\mathfrak p^{-k}]}R_c=
\{z \in (\mathfrak a_0 \mathfrak f\mathfrak p)^{-1} \cap D 
\mid (z \mathfrak a_0 \mathfrak f\mathfrak p, \mathfrak f\mathfrak p)=1,\ [z\mathfrak a_0 \mathfrak f\mathfrak p]= [\mathfrak b\mathfrak p^{-k}] 
\text{ in } C_\mathfrak f\}.
\end{align*}
Let $\alpha_0 \in F_+$ be a generator of the principal ideal $(\mathfrak a_0\mathfrak f\mathfrak p)(\mathfrak b\mathfrak p^{-k})^{-1}$.
Then the following are equivalent:
\begin{center}
$[z\mathfrak a_0 \mathfrak f\mathfrak p]= [\mathfrak b\mathfrak p^{-k}] \LR [(z\alpha_0)]=[(1)] 
\LR \exists \epsilon \in E_+$ s.t.\ $z\epsilon\alpha_0 \equiv 1 \bmod \mathfrak f$.  
\end{center}
Hence, taking a representative set $E_0$ of $E_+/E_{\mathfrak f ,+}$, we can write 
\begin{align*}
&\{z \in (\mathfrak a_0 \mathfrak f\mathfrak p)^{-1} \cap D 
\mid (z \mathfrak a_0 \mathfrak f\mathfrak p, \mathfrak f\mathfrak p)=1,\ [z\mathfrak a_0 \mathfrak f\mathfrak p]= [\mathfrak b\mathfrak p^{-k}] 
\text{ in } C_\mathfrak f\} \\
&=\coprod_{\epsilon \in E_0}(\epsilon\alpha_0)^{-1}
(F_\mathfrak f ^\times\cap\mathfrak b^{-1}\cap\epsilon\alpha_0D\cap \mathfrak p^k\mathcal O_{F_\mathfrak p}^\times).
\end{align*}
Namely we have  for $*=\emptyset,p$
\begin{align}  
\sum_{c \in C_{\mathfrak f\mathfrak p},\ \overline c=[\mathfrak b\mathfrak p^{-k}]}L\Gamma_*(R_c)
=\sum_{\epsilon \in E_0}L\Gamma_*((\epsilon\alpha_0)^{-1}
(F_\mathfrak f ^\times\cap\mathfrak b^{-1}\cap\epsilon\alpha_0D\cap \mathfrak p^k\mathcal O_{F_\mathfrak p}^\times)). \label{YpR0}
\end{align}
On the other hand, $\mathcal D_\mathfrak f ':=\coprod_{\epsilon \in E_0} \epsilon\alpha_0 D$ becomes another Shintani domain $\bmod E_{\mathfrak f,+}$,
and we can write for $*=\emptyset,p$
\begin{align} \label{R0lg}
L\Gamma_*(\mathfrak b,\mathcal D_\mathfrak f ',\mathfrak p^k\mathcal O_{F_\mathfrak p}^\times)=\sum_{\epsilon \in E_0}
L\Gamma_*(F_\mathfrak f ^\times\cap\mathfrak b^{-1}\cap\epsilon\alpha_0D\cap \mathfrak p^k\mathcal O_{F_\mathfrak p}^\times).
\end{align}
Then the assertion (\ref{Cl32}), replacing $\mathcal D_\mathfrak f$ with $\mathcal D_\mathfrak f'$, follows from (\ref{YpR0}), (\ref{R0lg})
and Proposition \ref{eZ}. 
We conclude the proof of (\ref{Cl3}) by showing that 
\begin{align*}
L\Gamma_p(\mathfrak b,\mathcal D_\mathfrak f ,\mathfrak p^k\mathcal O_{F_\mathfrak p}^\times)
-L\Gamma_p(\mathfrak b,\mathcal D_\mathfrak f ',\mathfrak p^k\mathcal O_{F_\mathfrak p}^\times)
=[L\Gamma(\mathfrak b,\mathcal D_\mathfrak f ,\mathfrak p^k\mathcal O_{F_\mathfrak p}^\times)
-L\Gamma(\mathfrak b,\mathcal D_\mathfrak f ',\mathfrak p^k\mathcal O_{F_\mathfrak p}^\times)]_p.
\end{align*}
Note that the independence on the choice of $\mathcal D_\mathfrak f$ is also discussed in \cite[\S 5.2]{Da} under certain conditions.
Similarly to \cite[Chap.\ III, Lemma 3.13]{Yo},
we see that there exist cones $C(\bm v_j)$ and units $u_j \in E_{\mathfrak f,+}$ ($j\in J''$) which satisfy
\begin{align*}
\mathcal D_\mathfrak f =\coprod_{j\in J''} C(\bm v_j),\quad 
\mathcal D_\mathfrak f '=\coprod_{j\in J''} u_jC(\bm v_j).
\end{align*}
Therefore it suffices to show that
\begin{align*}
&L\Gamma_p(\mathfrak b,C(\bm v_j),\mathfrak p^k\mathcal O_{F_\mathfrak p}^\times)
-L\Gamma_p(\mathfrak b,u_jC(\bm v_j),\mathfrak p^k\mathcal O_{F_\mathfrak p}^\times) \\
&=[L\Gamma(\mathfrak b,C(\bm v_j),\mathfrak p^k\mathcal O_{F_\mathfrak p}^\times)
-L\Gamma(\mathfrak b,u_jC(\bm v_j),\mathfrak p^k\mathcal O_{F_\mathfrak p}^\times)]_p.
\end{align*}
It follows from Proposition \ref{eZ}
since $F_\mathfrak f ^\times\cap\mathfrak b^{-1}\cap u_jC(\bm v_j)\cap\mathfrak p^k\mathcal O_{F_\mathfrak p}^\times
=u_j(F_\mathfrak f ^\times\cap\mathfrak b^{-1}\cap C(\bm v_j)\cap\mathfrak p^k\mathcal O_{F_\mathfrak p}^\times)$.
\end{proof}

\end{document}